\newcommand{\private}[1]{}
\newcommand{\bfn}[1]{}                          
\newcommand{\ifn}[1]{}      
\renewcommand\l@subsection{\@tocline{2}{0pt}{2pc}{5pc}{}}
\newcommand{\R}{{\mathbb R}}
\newcommand{\abs}[1]{{\left\vert #1 \right\vert}}
\newcommand{\hofiber}{\operatorname{hofiber}}
\newcommand{\holim}{\operatorname{holim}}
\newcommand{\tfiber}{\operatorname{tfiber}}
\newcommand{\tcofiber}{\operatorname{tcofiber}}
\newcommand{\Map}{\operatorname{Map}}
\newcommand{\Imm}{\operatorname{Imm}}
\newcommand{\Link}{\operatorname{Link}}
\newcommand{\del}{{\partial}}
\newcommand{\Top}{\operatorname{Top}}
\theoremstyle{plain}
\newtheorem{thm}{Theorem}[section]
\newtheorem{prop}[thm]{Proposition}
\newtheorem{lemma}[thm]{Lemma}
\newtheorem{cor}[thm]{Corollary}
\theoremstyle{definition}
\newtheorem{defin}[thm]{Definition}
\newtheorem{example}[thm]{Example}
\newtheorem{def/ex}[thm]{Definition/Example}
\theoremstyle{remark}
\newtheorem{rem}[thm]{Remark}
\newcommand{\refS}[1]{Section~\ref{S:#1}}
\newcommand{\refT}[1]{Theorem~\ref{T:#1}}
\newcommand{\refC}[1]{Corollary~\ref{C:#1}}
\newcommand{\refP}[1]{Proposition~\ref{P:#1}}
\newcommand{\refD}[1]{Definition~\ref{D:#1}}
\newcommand{\refL}[1]{Lemma~\ref{L:#1}}
\newcommand{\refE}[1]{equation~$(\ref{E:#1})$}
\begin{document}


\title[Derivatives of the identity and generalizations of Milnor's invariants]{Derivatives of the identity and generalizations of Milnor's invariants}


\author{Brian A. Munson}
\address{Department of Mathematics, Wellesley College, Wellesley, MA}
\email{bmunson@wellesley.edu}
\urladdr{http://palmer.wellesley.edu/\~{}munson}

\subjclass{Primary: 55P65; Secondary: 57Q45, 57R99}
\keywords{calculus of functors, Milnor invariants, link maps}


\begin{abstract}
{\bf Version: \today}
We synthesize work of U. Koschorke on link maps and work of B. Johnson on the derivatives of the identity functor in homotopy theory. The result can be viewed in two ways: (1) As a generalization of Koschorke's ``higher Hopf invariants'', which themselves can be viewed as a generalization of Milnor's invariants of link maps in Euclidean space; and (2) As a stable range description, in terms of bordism, of the cross effects of the identity functor in homotopy theory evaluated at spheres. We also show how our generalized Milnor invariants fit into the framework of a multivariable manifold calculus of functors, as developed by the author and Voli\'{c}, which is itself a generalization of the single variable version due to Weiss and Goodwillie. 
\end{abstract}

\maketitle

\tableofcontents

\parskip=4pt
\parindent=0cm


\section{Introduction}\label{S:Intro}


The main results of this paper are a synthesis, reinterpretation, and generalization of work of Johnson \cite{J:DHT} and Koschorke \cite{Kosch:Milnor}. It is also, in a sense, a continuation of work done by the author in \cite{M:LinkNumber}. The main result which expresses this is the following.

\begin{thm}\label{T:maintheorem}
Let $P$ be a smooth closed manifold of dimension $p$. There is a $((k+1)(n-2)-p)$-connected map, the ``total higher Hopf invariant of order $k$''
\begin{equation}\label{E:mainthmeqn}
H_k:\Map_\ast\left(P_+,\tfiber\left(S\mapsto \vee_{\underline{k}-S} S^{n-1}\right)\right)\rightarrow \prod_{(k-1)!}Q_+T\left(P;\epsilon^{k(n-2)+1}-TP\right),
\end{equation}

where $S$ ranges through subsets of $\underline{k}=\{1,\ldots, k\}$.
\end{thm}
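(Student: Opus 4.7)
The plan is to construct $H_k$ as the composite of Johnson's stable splitting of the cross-effect of the identity with a parametrized Pontryagin--Thom construction, and then to control the connectivity range carefully.

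For the source, note that $\tfiber(S\mapsto \vee_{\underline{k}-S} S^{n-1})$ is by construction the $k$-th cross effect $\crn_k(\mathrm{Id})$ of the identity functor evaluated at the constant $k$-tuple $(S^{n-1},\ldots,S^{n-1})$. Johnson's results in \cite{J:DHT} produce a natural map from this cross effect to a wedge of iterated smash products; in our constant case this gives a splitting onto $\bigvee_{(k-1)!}S^{k(n-2)+1}$. The exponent $k(n-2)+1$ is the bottom cell of the iterated Whitehead-bracket space built from $k$ copies of $S^{n-1}$, and the combinatorial factor $(k-1)!$ counts bracketings compatible with a fixed linear order (equivalently, maximal chains in the partition lattice of $\underline{k}$). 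I expect this splitting to be $(k+1)(n-2)$-connected, the range coming from iterated Blakers--Massey applied to the defining cube.

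Given the splitting, define $H_k$ as the composition
\[
\Map_\ast(P_+,\tfiber(S\mapsto \vee_{\underline{k}-S}S^{n-1})) \to \Map_\ast(P_+,\bigvee_{(k-1)!} S^{k(n-2)+1}) \to \prod_{(k-1)!} Q_+T(P;\epsilon^{k(n-2)+1}-TP),
\]
where the first arrow is induced by Johnson's splitting and the second is the product of Pontryagin--Thom equivalences applied to each wedge summand (using that $\Map_\ast(P_+,-)$ converts wedges into products in the relevant range). For each $f\colon P_+\to S^{k(n-2)+1}$ the Pontryagin--Thom construction returns the bordism class of the preimage of a regular value, namely a codimension-$(k(n-2)+1)$ framed submanifold of $P$, which represents an element of $Q_+T(P;\epsilon^{k(n-2)+1}-TP)$.

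The connectivity estimate $(k+1)(n-2)-p$ then assembles as follows: Johnson's splitting contributes $(k+1)(n-2)$-connectivity, while mapping out of $P_+$ drops this by $p=\dim P$ via elementary obstruction theory; the Pontryagin--Thom equivalence is sharper since $m=k(n-2)+1$ is in the stable range for spheres of that dimension. The main obstacle is to secure the unstable range for Johnson's splitting on this particular cross effect, i.e.\ to prove it is $(k+1)(n-2)$-connected rather than merely a stable equivalence. This requires a careful application of iterated Blakers--Massey (or Goodwillie's higher Blakers--Massey theorem) to the cube $S\mapsto \vee_{\underline{k}-S}S^{n-1}$, together with matching the resulting connectivity of the total fiber against the bottom cells of the $(k-1)!$ wedge summands.
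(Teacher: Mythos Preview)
Your overall strategy---compose Johnson's map with a parametrized Pontryagin--Thom identification and lose $p$ in connectivity by mapping out of $P_+$---is exactly what the paper does. But there is a genuine error in how you describe Johnson's map, and it breaks your intermediate factorization.

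Johnson's $T_k$ does \emph{not} land in a wedge of smash products. It lands in $\Map_\ast(\Delta_k,\,X_1\wedge\cdots\wedge X_k)$, and since $\Delta_k\simeq\vee_{(k-1)!}S^{k-1}$ this is the \emph{product} $\prod_{(k-1)!}\Omega^{k-1}(\wedge_i X_i)$. There is no natural map from the total fiber to $\vee_{(k-1)!}S^{k(n-2)+1}$; the map that does exist in that direction is the iterated Whitehead product $\widetilde{D}:\vee_{(k-1)!}\Sigma(\wedge_i X_i)\to\tfiber(\cdots)$, and it points the wrong way for your purposes. Consequently your first displayed arrow, through $\Map_\ast(P_+,\vee_{(k-1)!}S^{k(n-2)+1})$, does not exist as written, and the remark that ``$\Map_\ast(P_+,-)$ converts wedges into products in the relevant range'' is false unstably and unneeded once the target is already a product.

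The paper's route is: apply $T_k$ to land in $\prod_{(k-1)!}\Omega^{k-1}\Sigma^{k-1}S^{k(n-2)+1}$, stabilize via Freudenthal to $\prod_{(k-1)!}QS^{k(n-2)+1}$ (this costs nothing in the range), apply $\Map_\ast(P_+,-)$, and then invoke a space-level Pontryagin--Thom \emph{equivalence} $\Map_\ast(P_+,QS^m)\simeq Q_+T(P;\epsilon^m-TP)$. The connectivity $(k+1)(n-2)$ of $T_k$ is the \emph{statement} of Johnson's theorem (with $X_i=S^{n-2}$, so $(n-3)$-connected inputs), not something you need to re-derive from higher Blakers--Massey; the only step that loses connectivity is $\Map_\ast(P_+,-)$, giving $(k+1)(n-2)-p$.
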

The domain of $H_k$ is the space of maps from $P$ to the $k^{th}$ cross-effect of the identity functor (the ``total homotopy fiber'' of a $k$-cube of wedges of spheres) evaluated at $S^{n-1}$. The codomain is the infinite loopspace associated to the Thom spectrum of a virtual vector bundle, in this case $k(n-2)+1$ copies of the trivial bundle minus the tangent bundle of $P$. For more details about total homotopy fibers, see \refS{Back}; the notation in the codomain is explained in \refS{Conv}. 

Let us explain in a little more detail how this theorem relates the work of Johnson and Koschorke. Let $X_1,\ldots, X_k$ be based spaces. Johnson's work \cite{J:DHT} on the derivatives of the identity functor involved producing a highly-connected map from the loopspace of $\tfiber\left(S\mapsto\vee_{i\in \underline{k}-S}\Sigma X_i\right)$ to the loopspace of $\Map_\ast\left(\Delta_k,\Sigma X_1\wedge \Sigma X_2\wedge\cdots\wedge \Sigma X_k\right)$ which identifies the homotopy type of the derivatives of the identity functor. Suffice it to say here that $\Delta_k$ is a based complex with the homotopy type of $\vee_{(k-1)!}S^{k-1}$; see \refS{thespacedeltak} for more details. This map induces the map $H_k$ in \refT{maintheorem} if we set all $X_i$ to be spheres of the same dimension, essentially by composing with $\Map_\ast(P_+,-)$. 

There are two observations about the map $H_k$ which are central to this work: (1) the domain of $H_k$ arises as a homogeneous layer in a multivariable Taylor tower for the space of link maps when $P$ is a $(k+1)$-fold product; and (2) the codomain of $H_k$ has a bordism interpretation as a home for higher-order linking numbers, which follows from ideas due to Koschorke \cite{Kosch:Milnor}. A few more words about both of these are in order. Let us deal with (2) first. 

The codomain of $H_k$ is a homotopy theoretic model for a space whose points may be thought of as the ``higher-order linking numbers'' of the bordism class of a link $L=L_1\coprod \cdots\coprod L_k$ over the manifold $P$. Here is a technically incorrect, but suitable for the purposes of the introduction, way to construct the link in question (see \refS{InvtsMflds} for the correct version). Let $x_i\in S^{n-1}$ be a non-wedge point in the $i^{th}$ copy of $S^{n-1}$ in $\vee_{\underline{k}}S^{n-1}$. Given a map $f\in\Map_\ast\left(P_+,\tfiber\left(S\mapsto \vee_{\underline{k}-S}S^{n-1}\right)\right)$, a small homotopy will make it transverse to $\coprod_i x_i$, and denote by $L_i$ the inverse image of $x_i$ by $f$. This produces a link $L=\coprod_i L_i$ over $P$. Really it is a bordism class of a link, because it is only defined up to a homotopy of the map from $P$ to the wedge of spheres, but we will ignore such details for the time being. The link $L$ is an honest link embedded in $P$, but we prefer to think of it as a framed manifold $L=\coprod_i L_i$ together with a \emph{link map} to $X$; that is, a map to $X$ for which the image of $L_i$ and $L_j$ are disjoint for all $i\neq j$. From the link $L$ we can make ``higher-order linking numbers'' (which are themselves bordism classes of manifolds) in the sense of Koschorke \cite{Kosch:Milnor}, who generalized Milnor's invariants of classical links to linking of spheres of arbitrary dimension in Euclidean space. It is not clear from this sketchy description that these higher-order linking numbers should be invariants at all, but we have a different and more direct way of defining it so that this is obvious. The information encoded in $\tfiber\left(S\mapsto \vee_{\underline{k}-S}S^{n-1}\right)$ gives a reason why certain lower-order invariants vanish. It turns out that these higher-order linking numbers can be understood as an ``overcrossing locus'', an observation also made by Koschorke. This is discussed in \refS{MilnorInvts}, along with our \refC{kosthm1gen}, which is a generalization of Koschorke's theorem expressing the close relationship between Whitehead products and his higher Hopf invariants. 

As for (1), when $P=P_1\times\cdots\times P_{k+1}$, the domain of the map $H_k$ naturally arises in trying to understand the higher-order relative linking numbers of a pair of link maps $e,f:P_1\coprod\cdots\coprod P_{k+1}\to\R^n$ (they must satisfy a relative analog of ``almost triviality'' described in \refS{genmilnor}). Let $\Link\left(P_1,\ldots, P_{k+1};\R^n\right)$ denote the space of smooth maps $\coprod_iP_i\to\R^n$ such that the images of $P_i$ and $P_j$ are disjoint for all $i\neq j$. The space $\Map_\ast\left(\left(P_1\times\cdots\times P_{k+1}\right)_+,\tfiber\left(S\mapsto \vee_{\underline{k}-S} S^{n-1}\right)\right)$ appears as the ``homogeneous degree $(1,1,\ldots, 1)$ layer'' of the multivariable Taylor tower of the space of link maps, giving us the following corollary. Write $\vec{1}=(1,1,\ldots, 1)$.

\begin{cor}\label{C:cormainthm}
Let $P_1,\ldots, P_{k+1}$ be smooth closed manifolds of dimensions $p_1,\ldots, p_{k+1}$ respectively, and let $p=\sum_ip_i$. There is a $((k+1)(n-2)-p)$-connected map
\begin{equation*}
L_{\vec{1}}\Link\left(P_1,\ldots, P_{k+1};\R^n\right)\rightarrow \prod_{(k-1)!}QT\left(P_1\times\cdots\times P_{k+1};\epsilon^{k(n-2)+1}-T\left(P_1\times\cdots\times P_{k+1}\right)\right).
\end{equation*}
\end{cor}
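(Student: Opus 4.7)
The plan is to deduce \refC{cormainthm} from \refT{maintheorem} by identifying the domain spaces on the two sides. Specifically, I will argue that
\begin{equation*}
L_{(1,1,\ldots,1)}\Link(P_1,\ldots, P_{k+1};\R^n) \simeq \Map_\ast\bigl((P_1\times\cdots\times P_{k+1})_+,\tfiber(S\mapsto \vee_{\underline{k}-S} S^{n-1})\bigr),
\end{equation*}
after which the corollary is obtained by applying \refT{maintheorem} with $P = P_1\times\cdots\times P_{k+1}$, so that $p=\sum_i p_i$ and the connectivity range $(k+1)(n-2)-p$ matches on both sides. The codomains then coincide up to the distinction between based and unbased forms of $Q$; this distinction is accounted for by the fact that the homogeneous layer $L_{(1,\ldots,1)}$ is naturally a based space, with basepoint the null link map.

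The identification of the layer proceeds through the multivariable manifold calculus of functors developed by the author and Voli\'{c}. By the general splitting theorem for homogeneous layers in that framework, $L_{(1,\ldots,1)}\Link(P_1,\ldots,P_{k+1};\R^n)$ is the space of compactly supported sections of a bundle over $P_1\times\cdots\times P_{k+1}$ whose fiber at $(x_1,\ldots,x_{k+1})$ is the total homotopy fiber of a $(k+1)$-cube recording the disjointness of small disks around the $x_i$ in $\R^n$. Using the contractibility of $\R^n$ and replacing each pairwise disjointness condition by a Pontryagin-Thom collapse onto a sphere of directions $S^{n-1}$, this fiber becomes the total homotopy fiber of a configuration-space-like cube. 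After pinning one of the $k+1$ components to a basepoint in $\R^n$ (which is allowed because the target is contractible), the resulting cube is indexed by subsets of $\underline{k}$ and has vertex $\vee_{\underline{k}-S}S^{n-1}$, exactly as in the domain of $H_k$ in \refT{maintheorem}. The section space is then identified with the mapping space in the standard way using the Thom-Pontryagin framing.

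The main obstacle will be making the previous identification precise, in particular tracking how the $(k+1)$-cube of disk configurations reduces to a $k$-indexed cube of wedges of spheres. This asymmetry is forced: in the contractible target $\R^n$ one of the $k+1$ components can be transported to infinity (or to a fixed basepoint), and what remains measures the linking of the other $k$ components with respect to it, contributing the $k$ wedge summands of $S^{n-1}$. Once this cubical identification of homogeneous layers with the mapping space is in place, the corollary is an immediate consequence of \refT{maintheorem} by specializing $P$ to the product $P_1\times\cdots\times P_{k+1}$.
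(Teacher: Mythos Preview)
Your proposal is correct and follows essentially the same route as the paper. The paper makes the identification $L_{\vec{1}}\Link(\vec{P};\R^n)\simeq\Map(P_1\times\cdots\times P_{k+1},\tfiber(S\mapsto\vee_{\underline{k}-S}S^{n-1}))$ in \refT{multilinklayer} by combining \refL{linklayer} (the multivariable homogeneous classification gives a section space, which over $\R^n$ reduces to a mapping space by checking on disks) with \refL{configtosphere} (the $(k+1)$-cube of configurations reduces to the $k$-cube of wedges via the Fadell--Neuwirth fibration $C(\underline{k}\cup\{k+1\}-S,\R^n)\to C(\underline{k}-S,\R^n)$ with fiber $\vee_{\underline{k}-S}S^{n-1}$), and then applies \refT{maintheorem} with $P=P_1\times\cdots\times P_{k+1}$ exactly as you propose.
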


The multivariable manifold calculus of functors is a generalization due to the author and Voli\'{c} \cite{MV:Multi} of the manifold calculus of functors due to Weiss and Goodwillie \cite{W:EI1,GW:EI2}, built with the space of link maps in mind. What \refC{cormainthm} tells us is that there is a stable range description, in terms of bordism, of certain layers in the multivariable manifold calculus tower of link maps, and that this stable range description, that is, the map $H_k$, admits a geometric interpretation as a generalization of Koschorke and Milnor's invariants for link maps. This is covered in greater detail in \refS{Applications}, where we also compute some mapping space models for $L_{\vec{1}}\Link\left(P_1,\ldots, P_{k+1};N\right)$ which involve maps of products of the $P_i$ into configuration spaces. These models mirror how Koschorke \cite{Kosch:Milnor} built his invariants of a $(k+1)$-component link map $f=\coprod_if_i:\coprod_i S^{p_i}\to \R^n$ from $\widehat{f}=\prod_if_i:\prod_iS^{p_i}\to C(k+1,\R^n)$, where the codomain of $\widehat{f}$ is the configuration space of $k+1$ points in $\R^n$.

Bruce Williams and John Klein have independently discovered an invariant similar to our $H_k$. Here is the setup: Suppose $P_1,\ldots, P_k$ are embedded disjointly in a smooth manifold $N$. They study the problem of making a map $P_{k+1}\to N$ disjoint from the $P_i$. For a subset $S\subset \{1,\ldots, k\}$, let $P_S=\cup_{i\in\underline{k}-S} P_i$, and consider the diagram $S\mapsto\Map\left(P_{k+1},N-P_S\right)$. There is a map $\hofiber\left(\Map\left(P_{k+1},N-P_{\underline{k}})\to\holim_{S\neq\emptyset}\Map(P_{k+1},N-P_S\right)\right)$ to a space similar to the codomain of $H_k$ (and equal to it in the case where $N=\R^n$), and they can show that it has a high connectivity, using different methods than we employ. Their methods are not available to us because we do not assume our link maps $\coprod_iP_i\to N$ are embeddings (Klein and Williams require all but $P_{k+1}$ be be embedded).

A natural consequence of the observations discussed above is a generalization of Koschorke's higher-order linking numbers \cite{Kosch:Milnor}. Our generalization builds upon Koschorke's work in the following ways:
\begin{enumerate}
\item We have a notion of higher-order linking of arbitrary smooth closed manifolds $P_i$; Koschorke's work focuses on spheres.
\item All of our constructions are functorial in the $P_i$.
\item All of our constructions are on the level of spaces instead of groups, hence the invariance (under the suitable notion of equivalence) of all of our constructions is immediate.
\item We eliminate the assumption that a link map be ``$\kappa$-Brunnian'', and replace it with a relative analog of almost triviality (see \refS{Applications} for definitions). That is, instead of trying to measure whether a given link map is homotopic to the trivial link, we instead focus on the question of whether two arbitrary link maps $e,f$ are link-homotopic. We replace the assumption that a link is almost trivial with a condition very close to assuming all of the sub-link maps of $e$ and $f$ are link-homotopic (ours is implied by a slightly stronger condition than this; see \refS{genmilnor}). This is analogous to the fact that the linking number should be thought of a relative invariant of link maps, a point of view explored in \cite{M:LinkNumber}.
\end{enumerate}

Finally, we wish to reiterate that this work lies at the intersection of two different versions of calculus of functors intended for different purposes: the homotopy calculus, as evidenced by our use of Johnson's work on the derivatives of the identity, and manifold calculus applied to the space of link maps.

This paper is organized as follows. \refS{Back} contains some background material concerning cubical diagrams and total homotopy fibers. In \refS{JohnsonThesis} we discuss the main theorem of \cite{J:DHT}, some of its corollaries, and the details of its proof we will need later on. \refS{MilnorInvts} discusses Koschorke's work \cite{Kosch:Milnor} on higher Hopf invariants and generalizations of Milnor's invariants, and we prove a generalization, \refT{kosthm1gengen} of his main theorem (appearing here as \refT{kosthm1}). \refS{LinkBordism} is a detour into ``cobordism spaces'' which are used for our bordism description of the map in \refT{maintheorem} and its geometric interpretation as an overcrossing locus, which also appears in this section. Finally, \refS{Applications} applies our theorem to the study of link maps in Euclidean space. We show how these invariants are organized by a multivariable manifold calculus tower, and we give explicit models for the relevant stages of this tower as homotopy limits of diagrams of maps into configuration spaces.

It would be interesting to know what invariants the other homogeneous layers of the Taylor tower for link maps contain, and under what conditions, if any, the multivariable Taylor tower converges to the space of link maps. The latter appears to be a very difficult question, and the former a potentially very interesting one, especially in the classical case of link maps of circles in $\R^3$. It would also be interesting to extend the results of this paper to link maps into a generic manifold $N$.


\subsection{Conventions}\label{S:Conv}

For a finite set $S$, let $|S|$ stand for its cardinality. Let $\underline{k}$ denote the set $\{1,\ldots, k\}$. For a space $X$ equipped with a vector bundle $\xi$, let $T(X;\xi)$ denote the Thom space. It is the quotient of the total space $D(X;\xi)$ of this bundle by the sphere bundle $S(X;\xi)$. For a based space $X$, we let $QX$ stand for $\Omega^{\infty}\Sigma^{\infty} X$. If $X$ is unbased, we let $X_+$ denote $X$ with a disjoint basepoint added and let $Q_+X$ stand for $Q(X_+)$. Basepoints will be denoted $\ast$ unless otherwise noted.


\section{Background}\label{S:Back}


We begin this section with a discussion of cubical diagrams. In particular we will review a detailed description of the ``total homotopy fiber'' of a cubical diagram. We also include a result which allows us to compare the connectivities of certain adjoint maps.


\subsection{Cubical diagrams}\label{S:Cubes}


We will assume the reader is familiar with homotopy limits.  Details about cubical diagrams can be found in \cite[Section 1]{CalcII}. For a finite set $T$, let $\mathcal{P}(T)$ be the poset of subsets of $T$, and $\mathcal{P}_0(T)$ the sub-poset of nonemtpy subsets.

\begin{defin}
Let $T$ be a finite set. A \textsl{$\abs{T}$-cube} of spaces is a covariant functor $$\mathcal{X}\colon\mathcal{P}(T)\longrightarrow\Top.$$ We may also speak of a cube of based spaces; in this case, the target is $\Top_\ast$.
\end{defin}

\begin{defin}
The \textsl{total homotopy fiber}, or \textsl{total fiber}, of a $\abs{T}$-cube $\mathcal{X}$ of based spaces, denoted $\tfiber(S\mapsto \mathcal{X}(S))$ or $\tfiber(\mathcal{X})$, is the homotopy fiber of the map 
$$
\mathcal{X}(\emptyset)\longrightarrow\underset{S\in\mathcal{P}_0(T)}{\holim}\, \, \mathcal{X}(S).
$$ 
If this map is a $k$-connected, we say the cube is \emph{$k$-cartesian}. In case $k=\infty$, (that is, if the map is a weak equivalence), we say the cube $\mathcal{X}$ is \textsl{homotopy cartesian}.
\end{defin}

The total fiber can also be thought of as an iterated homotopy fiber. That is, view a $\abs{T}$-cube $\mathcal{X}$ as a a natural transformation of $(\abs{T}-1)$-cubes $\mathcal{Y}\rightarrow\mathcal{Z}$. In this case, $\tfiber(\mathcal{X})=\hofiber(\tfiber(\mathcal{Y})\rightarrow\tfiber(\mathcal{Z}))$.

An equivalent, and more descriptive, definition of total homotopy fiber is the following.

\begin{defin}\label{D:tfiber}[Definition 1.1 of \cite{CalcII}]
Let $\mathcal{X}$ be a $|T|$-cube of based spaces. A point $\Phi\in\tfiber(\mathcal{X})$ is a collection of maps $\Phi_S:I^S\to \mathcal{X}(S)$ for each subset $S\subset T$ satisfying the following two conditions.
\begin{enumerate}
\item $\Phi$ is natural with respect to $S$: let $S'\subset S\subset T$ and let $I^{S'}\to I^{S}$ be the inclusion by zero on the coordinates of $I^S$ labeled by $S-S'$. Then the following diagram commutes:
$$
\xymatrix{
I^{S'}\ar[r]\ar[d]_{\Phi_{S'}} & I^S\ar[d]^{\Phi_S}\\
\mathcal{X}(S')\ar[r] & \mathcal{X}(S)\\
}
$$
\item For each $S$, $\Phi_S$ takes $(I^S)_1=\{u\in I^S | \mbox{ there exists $s\in S$ such that }u_s=1\}$ to the basepoint.
\end{enumerate}
\end{defin}

We end with a proposition that will be useful in the proof of \refT{kosthm1gengen}.

\begin{prop}\label{P:connadjoints}
Suppose $X$ and $Y$ are based spaces and that $X$ is $n$-connected. Let $f:X\to \Omega Y$ and $\widetilde{f}:\Sigma X\to Y$ adjoint maps. Then
\begin{itemize}
\item If $f$ is $k$-connected, $\widetilde{f}$ is $(\min\{2n+2,k\}+1)$-connected. 
\item If $\widetilde{f}$ is $(k+1)$-connected, $f$ is $\min\{2n+1,k\}$-connected.
\end{itemize}
\end{prop}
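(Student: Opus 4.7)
The plan is to reduce both statements to the Freudenthal suspension theorem applied to the unit of the suspension–loop adjunction, combined with the standard ``two-out-of-three'' estimate for connectivity of composites.

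First I would write down the defining factorization of the adjoint,
\begin{equation*}
f\ \colon\ X\xrightarrow{\ \eta_X\ }\Omega\Sigma X\xrightarrow{\ \Omega\widetilde{f}\ }\Omega Y,
\end{equation*}
where $\eta_X$ is the unit of the $(\Sigma,\Omega)$-adjunction. Freudenthal's suspension theorem tells us that, because $X$ is $n$-connected, $\eta_X$ is $(2n+1)$-connected. I would also invoke the elementary fact that $g$ is $(m+1)$-connected if and only if $\Omega g$ is $m$-connected, since $\pi_i\Omega Y=\pi_{i+1}Y$.

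For the second bullet, suppose $\widetilde{f}$ is $(k+1)$-connected. Then $\Omega\widetilde{f}$ is $k$-connected, and the composition of a $(2n+1)$-connected map with a $k$-connected map is $\min\{2n+1,k\}$-connected (direct diagram chase with the five lemma on the long exact sequences, or from the cofiber sequence). So $f$ has connectivity $\min\{2n+1,k\}$, as claimed. For the first bullet, suppose instead that $f$ is $k$-connected. I would then apply the following ``two-out-of-three'' estimate: if $u\colon A\to B$ is $p$-connected and $v\circ u\colon A\to C$ is $q$-connected, then $v\colon B\to C$ is $\min\{p,q\}$-connected. A short $\pi_i$-diagram chase verifies this. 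Applied to $u=\eta_X$ (with $p=2n+1$) and $v=\Omega\widetilde{f}$ (with $q=k$), this gives that $\Omega\widetilde{f}$ is $\min\{2n+1,k\}$-connected, and hence $\widetilde{f}$ is $(\min\{2n+1,k\}+1)$-connected.

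The main conceptual input is Freudenthal; everything else is a formal deduction. The only place to be careful is the direction of the two-out-of-three lemma above, since what fails in general is the injectivity at the top degree — but this is precisely why the bound is the minimum of the two connectivities, not the smaller of the bounds shifted by one. I would verify it once by a direct long-exact-sequence argument and then feed it into both halves of the statement.
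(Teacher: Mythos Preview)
Your argument is correct and is essentially identical to the paper's: both factor $f$ as $X\to\Omega\Sigma X\xrightarrow{\Omega\widetilde f}\Omega Y$, invoke Freudenthal for the $(2n+1)$-connectivity of the unit, and then read off both bullets from the composite. The only difference is that you spell out the two-out-of-three connectivity estimate explicitly, whereas the paper simply asserts that $\Omega\widetilde f$ is $\min\{2n+1,k\}$-connected when $f$ is $k$-connected.
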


\begin{proof}
Consider the commutative diagram
$$
\xymatrix{
X\ar[r]\ar[dr]_{f} & \Omega\Sigma X \ar[d]^{\Omega \widetilde{f}}\\
&\Omega Y
}
$$
where $X\to\Omega\Sigma X$ is the canonical map. Since $X$ is $n$-connected, $X\to\Omega \Sigma X$ is $(2n+1)$-connected by the Freudenthal Suspension Theorem. If $\widetilde{f}$ is $(k+1)$-connected, then $\Omega \widetilde{f}$ is $k$-connected, and hence $f$ is $\min\{2n+1,k\}$-connected. If $f$ is $k$-connected, then $\Omega \widetilde{f}$ is $\min\{2n+2,k\}$-connected, and hence $\widetilde{f}$ is $(\min\{2n+2,k\}+1)$-connected.
\end{proof}


\section{The derivatives of the identity functor}\label{S:JohnsonThesis}


The work of Johnson \cite{J:DHT} is concerned with computing an explicit description of the derivatives of the identity functor in homotopy calculus. One important aspect of this description is a space $\Delta_k$, mentioned in the introduction, which is a quotient of the product of $k$ copies of the $(k-1)$-cube by certain subspaces. It has the homotopy type of $\vee_{(k-1)!}S^{k-1}$. Johnson proves the following theorem.

\begin{thm}\cite[Theorem 2.2]{J:DHT}\label{T:JohnsonThesis}
Let $X_1,\ldots, X_k$ be based spaces. There is a natural transformation of functors of $X_1,\ldots, X_k$
$$
T_k:\tfiber(S\mapsto\vee_{i\in \underline{k}-S}X_i)\longrightarrow\Map_\ast(\Delta_k,X_1\wedge X_2\wedge\cdots\wedge X_k)
$$
such that if $X_1,\ldots, X_k$ are $n$-connected, then the map
$$
\Omega T_k:\Omega\tfiber(S\mapsto\vee_{i\in \underline{k}-S}\Sigma X_i)\longrightarrow\Omega\Map_\ast(\Delta_k,\Sigma X_1\wedge \Sigma X_2\wedge\cdots\wedge \Sigma X_k)
$$
is $((k+1)(n+1)-1)$-connected.
\end{thm}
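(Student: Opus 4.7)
The plan is to first give an explicit combinatorial construction of $T_k$ from the cubical description of $\tfiber$, and then estimate the connectivity of $\Omega T_k$ using a Hilton--Milnor style decomposition.

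For the construction, I would use \refD{tfiber} to describe a point $\Phi\in\tfiber(S\mapsto\vee_{i\in\underline{k}-S}X_i)$ as a coherent family of maps $\Phi_S\colon I^S\to\vee_{i\in\underline{k}-S}X_i$, each vanishing on the ``top'' faces. The crucial observation is that when $S=\underline{k}-\{i\}$, the target wedge has a single summand $X_i$, so $\Phi_{\underline{k}-\{i\}}$ is simply a map $I^{k-1}\to X_i$. Assembling these $k$ maps gives a composite
\[
\prod_{i=1}^{k}I^{k-1}\xrightarrow{\prod_i\Phi_{\underline{k}-\{i\}}}\prod_{i=1}^{k}X_i\longrightarrow X_1\wedge\cdots\wedge X_k.
\]
The coherence condition (1) of \refD{tfiber}, together with the fact that the projection onto the smash product kills any tuple with a basepoint coordinate, forces this map to vanish on exactly the faces of $\prod_i I^{k-1}$ that define the quotient $\Delta_k$. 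It therefore factors through $\Delta_k$, yielding $T_k(\Phi)\in\Map_\ast(\Delta_k,X_1\wedge\cdots\wedge X_k)$, manifestly natural in the $X_i$.

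For the connectivity estimate, I would loop, commute $\Omega$ past $\tfiber$, and invoke Hilton--Milnor. After looping,
\[
\Omega\tfiber\bigl(S\mapsto\vee_{i\in\underline{k}-S}\Sigma X_i\bigr)\simeq\tfiber\bigl(S\mapsto\Omega\Sigma(\vee_{i\in\underline{k}-S}X_i)\bigr),
\]
and Hilton--Milnor splits each $\Omega\Sigma(\vee Y_i)$ as a weak product $\prod_w\Omega\Sigma(Y_{i_1}\wedge\cdots\wedge Y_{i_w})$ indexed by basic products. The total-fiber construction, being an iterated homotopy fiber, performs an alternating assembly that annihilates every summand whose index support is a proper subset of $\underline{k}$. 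What survives are the basic products whose support is all of $\underline{k}$: the weight-$k$ summands, of which there are exactly $(k-1)!$ by the classical Lyndon/Witt count, assemble into $\prod_{(k-1)!}\Omega\Sigma(\Sigma X_1\wedge\cdots\wedge\Sigma X_k)$, which matches the target $\Omega\Map_\ast(\Delta_k,\Sigma X_1\wedge\cdots\wedge\Sigma X_k)$ since $\Delta_k\simeq\vee_{(k-1)!}S^{k-1}$. The weight-$(k+1)$ and higher summands involve smash products of at least $k+1$ copies of the $(n+1)$-connected spaces $\Sigma X_i$, which are $((k+1)(n+1)-1)$-connected; looping drops this by one, so the residual discrepancy is $((k+1)(n+1)-2)$-connected, and $\Omega T_k$ is $((k+1)(n+1)-1)$-connected.

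The main obstacle will be verifying rigorously that the Hilton--Milnor decompositions are compatible with the cubical structure on $S\mapsto\vee_{i\in\underline{k}-S}\Sigma X_i$, so that the promised cancellation takes place before passing to homotopy groups. A reasonable alternative, and probably the cleanest bookkeeping, is to induct on $k$: view the $k$-cube as a natural transformation of two $(k-1)$-cubes obtained by removing the index $k$, so that $\tfiber$ of the $k$-cube is the homotopy fiber of a map between the two $(k-1)$-fold total fibers. The inductive hypothesis controls each, and Blakers--Massey then controls the resulting iterated homotopy fiber, producing the connectivity bound inductively. In either approach the combinatorial heart — counting the $(k-1)!$ surviving basic products of weight $k$ and bounding the contribution of higher weights — is the essential content of Johnson's original argument in \cite{J:DHT}.
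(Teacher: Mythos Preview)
Your construction of $T_k$ agrees with Johnson's as the paper recounts it in \refS{JohnsonRemarks}: assemble the maps $\Phi_{\underline{k}-\{i\}}:I^{k-1}\to X_i$ into $T_k'$ and pass to the quotient $\Delta_k\to\wedge_iX_i$. For the connectivity, the paper does not itself give a proof; it cites \cite{J:DHT} and records in \refS{JohnsonRemarks} only the ingredients needed later. Those ingredients show that Johnson argues in the direction opposite to your Hilton--Milnor sketch: she builds iterated commutator maps $C_g:\prod_iX_i\to\Omega\tfiber(S\mapsto\vee_{\underline{k}-S}\Sigma X_i)$ for $g\in G_k$, passes to $D_g$ on the smash product, and computes that the composites $\Lambda_{g'}^\ast\circ\Omega T_k\circ C_g$ assemble into a matrix $\Gamma_{gg'}$ which is degree one on the diagonal and null off it (\refD{gammagh} and the proposition following it). This pins down $\Omega T_k$ on the image of the highly connected map $\vee_gD_g$.

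Your Hilton--Milnor approach is the dual picture and can be made to work, but there is a genuine gap beyond the cubical-naturality issue you already flag. Even if the proper-subset summands cancel in the total fiber, what survives is \emph{some} highly connected map from $\Omega\tfiber$ to $\prod_{(k-1)!}\Omega\Sigma(X_1\wedge\cdots\wedge X_k)$, namely the Hilton--Milnor projection onto the weight-$k$ full-support words. You have not argued that this projection agrees, in the relevant range, with the explicit $\Omega T_k$ built from the $\Phi_{\underline{k}-\{i\}}$; that identification is exactly what Johnson's diagonal-$\Gamma$ computation supplies, and without it you have shown only that source and target are abstractly equivalent in a range, not that $T_k$ realizes the equivalence. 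Two smaller points: the weight-$k$ Hilton--Milnor factor is $\Omega\Sigma(\wedge_iX_i)$, whereas the target of $\Omega T_k$ is $\prod_{(k-1)!}\Omega^k\Sigma^k(\wedge_iX_i)$, so a Freudenthal comparison still intervenes; and in your connectivity count the smash factors are the $n$-connected $X_i$ (not the $(n+1)$-connected $\Sigma X_i$), with no further loop applied, so the weight-$(k+1)$ error term is already $((k+1)(n+1)-1)$-connected without the extra ``drop by one''.
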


\begin{rem}
The cube $S\mapsto\vee_{i\in \underline{k}-S}\Sigma X_i$ is $(k(n+1)+1)$-cartesian if the $X_i$ are $n$-connected by the Blakers-Massey Theorem (\cite{ES:BM}, also see \cite[Theorem 2.3]{CalcII}); easier still to see is that the codomain of $T_k$ is $k(n+1)$-connected. In any case, we can interpret the second part of \refT{JohnsonThesis} as a stable range description of the homotopy type of a ``stabilization'' of $\tfiber(S\mapsto\vee_{i\in \underline{k}-S} X_i)$.
\end{rem}


\subsection{Corollaries of \refT{JohnsonThesis}}\label{S:CorJohnsonThesis}


The following corollary is important for our main result, and follows immediately. For these corollaries, we need that the spaces $X_i$ are connected.

\begin{cor}\label{C:unloopedtk}
If $X_1,\ldots, X_k$ are $n$-connected spaces with $n\geq 0$, then
$$
T_k:\tfiber(S\mapsto\vee_{i\in \underline{k}-S}\Sigma X_i)\longrightarrow\Map_\ast(\Delta_k,\Sigma X_1\wedge\Sigma X_2\wedge\cdots\wedge\Sigma X_k)
$$
is $(k+1)(n+1)$-connected.
\end{cor}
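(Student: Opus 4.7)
The plan is to deduce the corollary directly from \refT{JohnsonThesis} via the standard principle that if $f\colon X \to Y$ is a map of path-connected based spaces and $\Omega f$ is $c$-connected, then $f$ itself is $(c+1)$-connected. (This is immediate from the identification $\pi_i(\Omega Z)=\pi_{i+1}(Z)$ applied to source and target, using path-connectedness to handle $\pi_0$.) Taking $c=(k+1)(n+1)-1$ in \refT{JohnsonThesis} gives exactly the claimed connectivity of $T_k$, so the whole task reduces to verifying that both the source and target of $T_k$ are path-connected.

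For the source, the higher Blakers--Massey theorem recalled in the remark after \refT{JohnsonThesis} implies the cube $S\mapsto \vee_{i\in\underline{k}-S}\Sigma X_i$ is $(kn+1)$-cartesian when each $X_i$ is $n$-connected. By definition, the total homotopy fiber of a $c$-cartesian cube is $(c-1)$-connected, so the source of $T_k$ is $kn$-connected, and in particular path-connected because $n\geq 0$. For the target, a short induction using the fact that the smash of an $a$-connected and a $b$-connected space is $(a+b+1)$-connected shows that $\Sigma X_1\wedge\cdots\wedge\Sigma X_k$ is $(k(n+2)-1)$-connected. Since $\Delta_k$ has the homotopy type of $\vee_{(k-1)!}S^{k-1}$ and is therefore a CW complex of dimension $k-1$, the based mapping space $\Map_\ast(\Delta_k,\Sigma X_1\wedge\cdots\wedge\Sigma X_k)$ is at least $k(n+1)$-connected, hence path-connected.

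There is no substantive obstacle here; the corollary is bookkeeping on top of \refT{JohnsonThesis}. The only mild subtlety is that the delooping step genuinely requires path-connectedness of both endpoints, which explains why the hypothesis $n\geq 0$ (absent from the theorem itself) is invoked in the corollary and why the two connectivity checks above are included.
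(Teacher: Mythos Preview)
Your argument is correct and is exactly the one the paper has in mind: the text simply says the corollary ``follows immediately'' from \refT{JohnsonThesis}, noting only that one needs the $X_i$ to be at least $0$-connected. You have spelled out the standard delooping step (that $\Omega f$ being $c$-connected forces $f$ to be $(c+1)$-connected once source and target are path-connected) and supplied the requisite path-connectedness checks, which is precisely the content the author left implicit.
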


As we mentioned above, $\Delta_k$ is homotopy equivalent to $\vee_{(k-1)!}S^{k-1}$, a fact we will discuss in more detail in \refS{JohnsonRemarks}. Thus we have an equivalence 
$$
\Map_\ast(\Delta_k,\Sigma X_1\wedge\Sigma X_2\wedge\cdots\wedge\Sigma X_k)\simeq\prod_{(k-1)!}\Omega^{k-1}\Sigma^{k-1}\Sigma(\wedge_{i=1}^kX_i).\bfn{you'll want to say which $\Sigma$ that is, and I think it's the one that goes with $X_1$, since that's the guy which gets ``fixed'' later on}
$$
Let $T_k^s:\Map_\ast(\Delta_k,\Sigma X_1\wedge\Sigma X_2\wedge\cdots\wedge\Sigma X_k)\to\prod_{(k-1)!}Q\Sigma(\wedge_{i=1}^kX_i)$ denote the composition of $T_k$ with the canonical map $\prod_{(k-1)!}\Omega^{k-1}\Sigma^{k-1}\Sigma(\wedge_{i=1}^kX_i)\to\prod_{(k-1)!}Q\Sigma(\wedge_{i=1}^kX_i)$. The following corollary immediately follows from the Freudenthal Suspension Theorem.

\begin{cor}\label{C:stableunloopedtk}
$$
T_k^s:\tfiber(S\mapsto\vee_{i\in \underline{k}-S}\Sigma X_i)\longrightarrow \prod_{(k-1)!}Q\Sigma(\wedge_{i=1}^k X_i).
$$
is $(k+1)(n+1)$-connected.
\end{cor}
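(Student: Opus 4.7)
The plan is to factor $T_k^s$ and show the stabilization factor is even more highly connected than the map $T_k$ itself. By construction, $T_k^s = c \circ T_k$, where $c$ is the product of $(k-1)!$ copies of the canonical stabilization map $\Omega^{k-1}\Sigma^{k-1}Y \to QY$ with $Y = \Sigma(X_1 \wedge \cdots \wedge X_k)$. By \refC{unloopedtk}, $T_k$ is already $(k+1)(n+1)$-connected, so it suffices to show $c$ has at least this connectivity. Since the connectivity of a finite product of maps is the minimum of the individual connectivities, this further reduces to bounding the connectivity of a single copy of the stabilization map.

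Next I would compute the connectivity of $Y$. Using the standard fact that the smash of an $a$-connected and a $b$-connected based space is $(a+b+1)$-connected, an induction gives that $X_1 \wedge \cdots \wedge X_k$ is $(k(n+1)-1)$-connected, and hence $Y$ is $k(n+1)$-connected. Then I would apply the Freudenthal Suspension Theorem iteratively to the telescope $\Omega^{k-1}\Sigma^{k-1}Y \to \Omega^k\Sigma^k Y \to \Omega^{k+1}\Sigma^{k+1}Y \to \cdots$ whose colimit is $QY$. If $Y$ is $m$-connected, each map $\Omega^j\Sigma^j Y \to \Omega^{j+1}\Sigma^{j+1}Y$ is obtained by looping the $(2(m+j)+1)$-connected unit $\Sigma^j Y \to \Omega\Sigma^{j+1}Y$, so it is $(2m+j+1)$-connected, and the composite stabilization to $QY$ is at least $(2m+k)$-connected. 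With $m = k(n+1)$, this gives connectivity at least $2k(n+1)+k$.

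A one-line check shows that $2k(n+1) + k - (k+1)(n+1) = n(k-1) + 2k - 1 \geq 0$ for $k \geq 1$ and $n \geq 0$ (the standing $0$-connected hypothesis at the start of this subsection). Thus $c$ is at least $(k+1)(n+1)$-connected, and the composition $T_k^s = c \circ T_k$ has the claimed connectivity. There is no real obstacle: once one observes that the threshold $(k+1)(n+1)$ is well inside the Freudenthal stable range for a space as highly connected as $Y$, the argument is a purely routine piece of connectivity bookkeeping.
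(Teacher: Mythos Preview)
Your proof is correct and follows exactly the approach the paper indicates: the paper simply writes ``By the Freudenthal Suspension Theorem'' before stating this corollary, and you have spelled out the routine connectivity bookkeeping that this phrase encodes. Your factorization $T_k^s = c\circ T_k$, the computation that $Y=\Sigma(\wedge_i X_i)$ is $k(n+1)$-connected, and the resulting Freudenthal bound on $c$ are precisely the intended justification.
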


Let $P$ be a $p$-dimensional manifold (or CW complex). Composing with $\Map_\ast(P_+,-)$, we have the following.

\begin{cor}\label{C:stableunloopedtksphere}
$T^s_k$ induces a $((k+1)(n+1)-p)$-connected map
\begin{equation}\label{E:tfiberhtpyconn}
(T_k^s)_\ast:\Map_\ast(P_+,\tfiber(S\mapsto\vee_{i\in \underline{k}-S}\Sigma X_i))\longrightarrow \prod_{(k-1)!}\Map_\ast(P_+,Q\Sigma(\wedge_{i=1}^k X_i)).
\end{equation}
\end{cor}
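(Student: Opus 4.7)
The plan is to deduce this corollary as a direct application of \refC{stableunloopedtk} by applying the functor $\Map_\ast(P_+,-)$ and using two standard facts: (i) $\Map_\ast(P_+,-)$ commutes with finite products, so the codomain $\Map_\ast(P_+,\prod_{(k-1)!}Q\Sigma(\wedge_{i=1}^k X_i))$ is naturally identified with $\prod_{(k-1)!}\Map_\ast(P_+,Q\Sigma(\wedge_{i=1}^k X_i))$; and (ii) for a $p$-dimensional CW complex (or closed manifold) $P$, applying $\Map_\ast(P_+,-)$ to a $c$-connected map of based spaces yields a $(c-p)$-connected map. With $c=(k+1)(n+1)$ coming from \refC{stableunloopedtk}, this will give exactly the stated connectivity of $(k+1)(n+1)-p$.

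To justify (ii), I would convert the $c$-connected map $T_k^s\colon Y\to Z$ into a fibration with fiber $F$ which is then $(c-1)$-connected. Applying $\Map_\ast(P_+,-)$ preserves fiber sequences, producing
$$
\Map_\ast(P_+,F)\longrightarrow\Map_\ast(P_+,Y)\longrightarrow\Map_\ast(P_+,Z),
$$
so it suffices to show $\Map_\ast(P_+,F)\simeq\Map(P,F)$ is $(c-1-p)$-connected. This is standard obstruction theory: for $j\le c-1-p$, both the existence of a nullhomotopy of a map $S^j\to\Map(P,F)$ (equivalently a map $S^j\times P\to F$ that is trivial on $\{\ast\}\times P$) and the uniqueness of such a nullhomotopy are controlled by obstructions living in groups $H^{q}(P;\pi_{q+j}F)$ and $H^{q}(P;\pi_{q+j+1}F)$ respectively, all of which vanish because $F$ is $(c-1)$-connected and $\dim P=p$.

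I expect no real obstacle here; the only mild subtlety is making sure that the identification $\Map_\ast(P_+,\prod_\alpha Z_\alpha)\cong\prod_\alpha\Map_\ast(P_+,Z_\alpha)$ and the connectivity estimate (ii) are applied in the based setting with $P_+$ (a pointed space whose underlying CW structure has cells only in dimensions $\le p$ plus the disjoint basepoint), but these are routine. Alternatively, one can induct on a cell decomposition of $P$, at each stage using the cofiber sequence $S^{q-1}\to P^{(q-1)}\to P^{(q)}$ and the resulting fiber sequence of mapping spaces together with the fact that $\Map_\ast(S^q_+,F)\simeq F\times\Omega^q F$ loses exactly $q$ degrees of connectivity relative to $F$. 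Either route yields the desired $((k+1)(n+1)-p)$-connectivity with no new input beyond \refC{stableunloopedtk}.
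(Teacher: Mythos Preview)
Your proposal is correct and follows exactly the paper's approach: the paper simply states that the corollary follows by ``composing with $\Map_\ast(P_+,-)$'' and leaves the standard connectivity drop implicit, whereas you have spelled out the two ingredients (commutation with products and the $p$-fold loss of connectivity via obstruction theory or cell induction) in detail.
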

The special case we are concerned with is when $X_i=S^{n-2}$ for all $i$. In that case we have a $((k+1)(n-2)-p)$-connected map
\begin{equation}\label{E:tfiberhtpyconnsphere}
(T_k^s)_\ast:\Map_\ast(P_+,\tfiber(S\mapsto\vee_{i\in \underline{k}-S}S^{n-1}))\longrightarrow \prod_{(k-1)!}\Map_\ast(P_+,QS^{k(n-2)+1}).
\end{equation}

\begin{rem}
Let $P$ be a point and $X_i=S^{n-2}$ for all $i$. When $k=1$, this says that $S^{n-1}\to QS^{n-1}$ is $(2n-4)$-connected, while the Freudenthal Suspension Theorem tells us the map is actually $(2n-3)$-connected. The reason for the discrepancy is that we are getting this connectivity estimate from \refC{unloopedtk}, which says in the case $k=1$ that the identity map $\Sigma S^{n-2}\to \Sigma S^{n-2}$ is $(2n-4)$-connected, far less than optimal. Using this we are only able to conclude that the composed map $\Sigma S^{n-2}\to Q\Sigma S^{n-2}$ is $(2n-4)$-connected.

Similarly, when $k=2$, \refC{stableunloopedtksphere} says that
$$
\hofiber(S^{n-1}\vee S^{n-1}\to S^{n-1}\times S^{n-1})\longrightarrow QS^{2n-3}
$$
is $(3n-6)$-connected, while it is in fact $(3n-5)$-connected. A geometric understanding of this map was crucial to the main theorem of \cite{M:Emb}. It is also easy to see how this is related to the linking number.  By the Hurewicz Theorem, to show this map is $(3n-5)$-connected, it suffices to show the map induces an isomorphism on $\pi_{2n-3}$. A generator for $\pi_{2n-3}\hofiber(S^{n-1}\vee S^{n-1}\to S^{n-1}\times S^{n-1})$ is given by the Whitehead product $\iota:S^{2n-3}\to \hofiber(S^{n-1}\vee S^{n-1}\to S^{n-1}\times S^{n-1})$ of the inclusion maps $S^{n-1}\to S^{n-1}\vee S^{n-1}$. One way to see that the Whitehead product is indeed a generator is to form the link $\iota^{-1}(p_1)\cup\iota^{-1}(p_2)$, and show that it has linking number $\pm1$. Here $p_1,p_2$ are non-wedge points, one from each of the spheres in question. See \cite[4.2.2]{M:Emb} for details. We are not certain whether the connectivity estimate can be improved by one in general.
\end{rem}

In \refS{LinkBordism} we will give a bordism interpretation of the target of the map in \refE{tfiberhtpyconnsphere}. Now we turn to an exploration of the objects and constructions used in the proof of \refT{JohnsonThesis} necessary for such a bordism interpretation.


\subsection{Remarks on the proof of \refT{JohnsonThesis}}\label{S:JohnsonRemarks}


We require a more thorough understanding of the space $\Delta_k$ and the map $T_k$ in \refT{JohnsonThesis}. Johnson constructs a map
$$
T_k':\tfiber(S\mapsto\vee_{i\in \underline{k}-S}X_i)\longrightarrow\Map_\ast(I^{k(k-1)},X_1\times X_2\times\cdots\times X_k)
$$
defined as follows. Recall from \refD{tfiber} that a point in $\Phi\in\tfiber(S\mapsto\vee_{i\in \underline{k}-S}X_i)$ consists in part of a collection of maps $\Phi_i:I^{k-1}\to X_i$ for $i=1$ to $k$ satisfying certain properties. She defines
\begin{equation}\label{E:tkprime}
T'_k(\Phi)=\prod_{i=1}^k\Phi_i:\prod_{i=1}^k I^{k-1}\longrightarrow\prod_{i=1}^k X_i.
\end{equation}
She then identifies a subspace, described below in \refS{thespacedeltak}, of $\prod_{i=1}^k I^{k-1}=I^{k(k-1)}$ which maps to the fat wedge $\cup_{i<j}\prod_{l\neq i,j}X_l\times (X_i\vee X_j)$, and the map $T_k$ in \refT{JohnsonThesis} sends a point in the domain of $T_k'$ to the induced map of quotients.


\subsubsection{The space $\Delta_k$}\label{S:thespacedeltak}


Represent a point in $I^{k(k-1)}$ as a $k\times k$ matrix $[t_{ij}]_{1\leq i,j\leq k}$, where $t_{ii}=0$ for all $i$ and the $i^{th}$ row are coordinates of the $i^{th}$ copy of $I^{k-1}$ in the product in \refE{tkprime}. Consider the composite
$$
I^{k(k-1)}\stackrel{T_k'}{\longrightarrow}\prod_{i=1}^k X_i\stackrel{q}{\longrightarrow}\wedge_{i=1}^k X_i
$$
where $q$ is the quotient map. 

We make the following definitions.
\begin{defin}
\begin{itemize}
\item Let $Z=\{[t_{ij}]\, |\, t_{ij}=1 \mbox{ for some } i,j\}$.
\item For $i<j$, let $W_{ij}=\{[t_{ij}]\, |\, \mbox{row $i$}=\mbox{row $j$}\}$.
\item Let $\Delta_k=I^{k(k-1)}/Z\cup \bigcup_{i<j} W_{ij}$.
\end{itemize}
\end{defin}

Johnson shows that $q\circ T_k'$ carries $Z$ and $\cup_{i<j} W_{ij}$ to the basepoint, thus inducing the map $T_k$. Unfortunately, the homotopy type of $\Delta_k$ is not easy to understand from this definition. It does, however, have a much smaller homotopy equivalent subspace whose homotopy type is relatively simple.

\begin{defin}
\begin{itemize}
\item Let $I^{k-1}_1=\{[t_{ij}]\in I^{k(k-1)} | t_{ij}=0 \mbox{ for all }j\neq 1\}$. This is a $(k-1)$-dimensional subspace of $I^{(k-1)k}$ represented in coordinates by the first column of $[t_{ij}]$.
\item Let $\widetilde{Z}=Z\cap I^{k-1}_1$.
\item Let $\widetilde{W_{ij}}=W_{ij}\cap I^{k-1}_1$.
\item Let $\widetilde{\Delta_k}=I^{k-1}_1/\widetilde{Z}\cup \bigcup_{i<j} \widetilde{W_{ij}}$.
\end{itemize}
\end{defin}

\begin{prop}\cite[Propositions 5.6 and 5.8]{J:DHT}
The inclusion $I^{k-1}_1\to I^{k(k-1)}$ induces an equivalence $\widetilde{\Delta}_k\simeq\Delta_k$. Moreover, $\widetilde{\Delta}_k$ is homotopy equivalent to $\vee_{(k-1)!}S^{k-1}$.
\end{prop}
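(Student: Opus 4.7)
The plan is to prove the two claims in sequence: first the identification $\widetilde{\Delta}_k \simeq \vee_{(k-1)!} S^{k-1}$ via a chamber decomposition of the first-column cube, and then the equivalence $\widetilde{\Delta}_k \simeq \Delta_k$ by a collapse argument for the remaining columns.

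For the wedge-of-spheres identification, I would parametrize $I^{k-1}_1$ by $(t_{21},\ldots,t_{k1}) \in [0,1]^{k-1}$ and compute each restricted subspace explicitly. A direct check shows that $\widetilde{Z}$ is the union of the ``top'' faces $\{t_{i1}=1\}$ for $i\geq 2$; that $\widetilde{W}_{1j}$ reduces to the ``bottom'' face $\{t_{j1}=0\}$ for each $j\geq 2$, since row $1$ vanishes identically on $I^{k-1}_1$ and this forces every entry of row $j$ to be zero; and that $\widetilde{W}_{ij}$ for $2\leq i<j$ reduces to the braid-arrangement hyperplane $\{t_{i1}=t_{j1}\}$. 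Together $\widetilde{Z}\cup\bigcup_{j\geq 2}\widetilde{W}_{1j}$ is precisely $\partial I^{k-1}$, while the remaining hyperplanes partition $I^{k-1}$ into the $(k-1)!$ Weyl chambers $\{0\leq t_{\sigma(2)1}\leq\cdots\leq t_{\sigma(k)1}\leq 1\}$ indexed by permutations $\sigma$ of $\{2,\ldots,k\}$. Each closed chamber is a $(k-1)$-simplex whose entire boundary lies in the collapsed subspace, so in the quotient each chamber becomes an $S^{k-1}$ attached to the common basepoint, giving $\widetilde{\Delta}_k \cong \vee_{(k-1)!} S^{k-1}$.

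For $\widetilde{\Delta}_k \simeq \Delta_k$, the obvious linear retraction $r:I^{k(k-1)}\to I^{k-1}_1$ that zeroes out all entries outside the first column does not descend directly to the quotients: a matrix with, say, $t_{23}=1$ lies in $Z$ but its image under $r$ need not lie in $\widetilde{Z}\cup\bigcup\widetilde{W}_{ij}$. I would instead argue column-by-column, inductively collapsing one off-first column at a time, at each stage verifying that the analogous quotient of the smaller cube absorbs the additional identifications. Equivalently, one can give $I^{k(k-1)}$ a CW structure whose cells not contained in $I^{k-1}_1$ all map into $Z\cup\bigcup_{i<j}W_{ij}$ under the quotient map to $\Delta_k$, so that $\widetilde{\Delta}_k$ carries every non-basepoint cell of $\Delta_k$ and the inclusion is a homotopy equivalence.

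The main obstacle is precisely this second equivalence. The wedge identification is essentially a matter of recognizing the braid arrangement and its chambers, but justifying that the extra columns collapse requires a careful combinatorial analysis of how $W_{ij}$ interacts with off-first-column entries. Once the correct filtration or cell structure is in hand, the check reduces to showing that any configuration supported on an entry $t_{ij}$ with $j\geq 2$ either coincides under the quotient with a configuration supported on the first column, or becomes the basepoint in $\Delta_k$ because it forces some row equality or some entry to hit $1$.
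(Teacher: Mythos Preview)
The paper does not itself prove this proposition; it is quoted directly from Johnson's work \cite[Propositions 5.6 and 5.8]{J:DHT}. What the paper does supply, immediately after the statement, is an explicit description of the map $\vee_{(k-1)!}S^{k-1}\to\widetilde{\Delta}_k$ via the maps $h_g$ and $\lambda_g$ of Definitions~\ref{D:hg} and~\ref{D:lambdag}. Your chamber decomposition is correct and is precisely the structure underlying those maps: each $h_g$ parametrizes the Weyl chamber $\{t_{g(2)1}\leq\cdots\leq t_{g(k)1}\}$, and the subspace $L$ that $h_g$ carries into $Z\cup\bigcup W_{ij}$ is exactly the boundary of that chamber. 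So for the wedge identification your argument agrees with (and in fact justifies) the paper's description.

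For the equivalence $\widetilde{\Delta}_k\simeq\Delta_k$, the paper offers nothing beyond the citation, so there is no proof to compare against. That said, your proposal has a genuine gap here. Your CW-structure claim---that every cell of $I^{k(k-1)}$ not contained in $I^{k-1}_1$ lies in $Z\cup\bigcup_{i<j}W_{ij}$---is false as stated: a generic matrix $[t_{ij}]$ with all off-diagonal entries strictly in $(0,1)$ and all rows pairwise distinct lies in neither $Z$ nor any $W_{ij}$, yet is certainly not in $I^{k-1}_1$. Such points are \emph{not} collapsed to the basepoint in $\Delta_k$, so the quotient $\Delta_k$ genuinely has higher-dimensional cells than $\widetilde{\Delta}_k$ does, and the inclusion cannot be a cellular isomorphism. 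What one needs instead is a deformation of the pair $(I^{k(k-1)},Z\cup\bigcup W_{ij})$ onto the pair $(I^{k-1}_1,\widetilde{Z}\cup\bigcup\widetilde{W}_{ij})$; your column-by-column idea points in this direction, but you have not carried out the inductive step, and the naive linear retraction you already ruled out shows that some care is required. This is the content of Johnson's Proposition~5.6, to which the paper simply defers.
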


It will be useful to describe the equivalence $\widetilde{\Delta}_k\simeq\vee_{(k-1)!}S^{k-1}$ more explicitly. Following \cite{J:DHT}, let $\Sigma_k$ denote the of bijections $\gamma:\underline{k}\to\underline{k}$ such that $\gamma(1)=1$. This can clearly be identified with a subgroup isomorphic with the symmetric group $\Sigma_{k-1}$. 
\begin{defin}\label{D:hg}
For $\gamma\in \Sigma_{k-1}$, define maps $h_\gamma:I^{k-1}\to I^{k(k-1)}$ by
$$
(s_1,\ldots,s_{k-1})\mapsto[t_{ij}]
$$
where $t_{j1}(s_1,\ldots, s_{k-1})=\max\{s_l\,|\,l<\gamma^{-1}(j)\}$ and $t_{ji}=0$ for $i\neq 1$.
\end{defin}
The image of $h_\gamma$ in $I^{k(k-1)}$ is the $(k-1)$-cell where $t_{\gamma(2)1}<t_{\gamma(3)1}<\cdots<t_{\gamma(k)1}$. Note that the image of each $h_\gamma$ is in the preimage of $\widetilde{\Delta}_k$ by the quotient map $q:I^{k(k-1)}\to\Delta_k$. Moreover, $h_\gamma$ carries the subspace $L=\{(s_1,\ldots, s_{k-1} | s_i=0\mbox{ or }1\mbox{ for some $i$ or } s_i\geq s_j\mbox{ for some $i>j$}\}$ to $Z\cup \bigcup_{i<j}W_{ij}$. The quotient $I^{k-1}/L$ is equivalent to $S^{k-1}$ since $L$ is equivalent to $\del I^{k-1}$. 
\begin{defin}\label{D:lambdag}
Define $\lambda_\gamma=q\circ h_\gamma$.
\end{defin}
The map $\vee_{\gamma\in \Sigma_{k-1}}\lambda_\gamma:\vee_{\gamma\in \Sigma_{k-1}}S^{k-1}\to\widetilde{\Delta}_k$ is a homotopy equivalence.

\subsection{Commutators, Whitehead products, and $T_k$}

\begin{defin}
Let $X$ and $Y$ be based spaces. $C:\Omega X\times \Omega Y\to \Omega (X\vee Y)$ is the \emph{commutator}, defined by
$$
C(\alpha,\beta)=\alpha\beta\alpha^{-1}\beta^{-1}.
$$
\end{defin}

Let $e$ denote the constant loop at the basepoint, and let $\alpha, \beta$ be as above. Johnson \cite{J:DHT} defines homotopies $A,B:I^2\to I$ such that $\alpha(A(s,t))$ is a homotopy from $C(\alpha,e)$ to $e$, and $\beta(B(s,t))$ a homotopy from $C(e,\beta)$ to $e$.

\begin{defin}
Let $X$ be a based space. Define $\ell:X\to\Omega\Sigma X$ by $\ell(x)=(t\mapsto t\wedge x)$, the adjoint to the identity map $\Sigma X\to\Sigma X$.  
\end{defin}

Note that if $x=\ast$ is the basepoint, the loop $\ell(\ast)$ is the constant loop $e$ at the basepoint.

\begin{defin}
Let $X_1,X_2$ be based spaces, and define
$$
C_\iota:X_1\times X_2\longrightarrow \Omega\Sigma(X_1\vee X_2)
$$
by
$C_\iota(x_1,x_2)=C(\ell(x_1),\ell(x_2))$
\end{defin}

We denote the adjoint of $C_\iota$ by $\widetilde{C}_\iota:\Sigma (X_1\times X_2)\to\Sigma(X_1\vee X_2)$. This adjoint map induces the \emph{generalized Whitehead product} (\cite[Definition 2.2]{Ark:GWP}; also see \cite[Definition 6.2]{Spe:Hil-Mil} for a relative version), according to the following proposition.

\begin{prop}\label{P:genwhitprod}
$\widetilde{C}_\iota:\Sigma (X_1\times X_2)\to\Sigma(X_1\vee X_2)$ induces a map
$$
\iota:\Sigma (X_1\wedge X_2)\longrightarrow \Sigma (X_1\vee X_2).
$$
which is the generalized Whitehead product of the inclusion maps of the $X_i$ into $X_1\vee X_2$ defined in \cite{Ark:GWP}.
\end{prop}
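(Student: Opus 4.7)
The plan is to show that $\widetilde{C}_\iota$ becomes null-homotopic when restricted to the subspace $\Sigma(X_1 \vee X_2) \subset \Sigma(X_1 \times X_2)$, so that it factors (up to homotopy) through the cofiber $\Sigma(X_1 \times X_2)/\Sigma(X_1 \vee X_2) \simeq \Sigma(X_1 \wedge X_2)$, yielding the desired map $\iota$. To construct the null-homotopy, note that on $X_1 \times \{\ast\}$ the map $C_\iota$ sends $(x_1,\ast)$ to $C(\ell(x_1), e)$ because $\ell$ sends the basepoint to the constant loop $e$. The homotopy $A$ of the preceding subsection provides a canonical null-homotopy of $C(\alpha, e)$ depending naturally on $\alpha$, giving a null-homotopy of $C_\iota|_{X_1 \times \{\ast\}}$; the homotopy $B$ analogously handles $C_\iota|_{\{\ast\} \times X_2}$. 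Both null-homotopies restrict to the constant null-homotopy of $e$ at the wedge point $(\ast,\ast)$, so they glue into a single null-homotopy of $C_\iota|_{X_1 \vee X_2}$.

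Passing to adjoints yields a null-homotopy of $\widetilde{C}_\iota|_{\Sigma(X_1 \vee X_2)}$. Since $\Sigma(X_1 \vee X_2) \hookrightarrow \Sigma(X_1 \times X_2)$ is a cofibration with cofiber naturally identified with $\Sigma((X_1 \times X_2)/(X_1 \vee X_2)) = \Sigma(X_1 \wedge X_2)$, this null-homotopy extends $\widetilde{C}_\iota$ to a map $\iota : \Sigma(X_1 \wedge X_2) \to \Sigma(X_1 \vee X_2)$ fitting into a homotopy commutative triangle with the canonical quotient $\Sigma(X_1 \times X_2) \to \Sigma(X_1 \wedge X_2)$.

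It remains to identify $\iota$ with the generalized Whitehead product $[\Sigma i_1, \Sigma i_2]$ of the inclusions $i_j : X_j \to X_1 \vee X_2$. By Arkowitz's construction, the generalized Whitehead product of maps $f : \Sigma A \to Z$ and $g : \Sigma B \to Z$ is obtained by taking the commutator of their adjoints as a map $A \times B \to \Omega Z$, observing it factors through $A \wedge B$ by exactly the $A$- and $B$-style null-homotopies, and re-adjointing. The adjoints of $\Sigma i_1$ and $\Sigma i_2$ landing in $\Sigma(X_1 \vee X_2)$ are precisely the maps $x_j \mapsto \ell(i_j(x_j))$, which are the very inputs to the commutator defining $C_\iota$; hence our $\iota$ agrees with Arkowitz's definition essentially on the nose. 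The main obstacle is the bookkeeping underlying the null-homotopy step: one must verify that the null-homotopies supplied by $A$ and $B$ are compatible enough at $(\ast,\ast)$ to glue into a genuine null-homotopy on $\Sigma(X_1 \vee X_2)$, and that the resulting factorization coincides with Arkowitz's construction rather than merely producing a map in the same homotopy class.
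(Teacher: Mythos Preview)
Your proposal is correct and follows essentially the same route as the paper: both use the null-homotopies $A$ and $B$ to trivialize $C_\iota$ on $X_1\vee X_2$, pass to adjoints, factor through the cofiber $\Sigma(X_1\wedge X_2)$, and then invoke Arkowitz's definition. The paper packages the last step as a map of cofiber sequences rather than a direct factorization, but this is only a cosmetic difference; your added care about compatibility of the null-homotopies at $(\ast,\ast)$ and about matching Arkowitz's construction on the nose is, if anything, slightly more thorough than the paper, which simply cites \cite[Definition 2.2]{Ark:GWP} at that point.
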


\begin{proof}
Note that $C_\iota(\ell(\ast),\ell(x_2))=\ell(x_2)\ell^{-1}(x_2)$ and $C_\iota(\ell(x_1),\ell(\ast))=\ell(x_1)\ell^{-1}(x_1)$, have a chosen null-homotopy given by the maps $A$ and $B$ mentioned above (we have not defined these; see \cite{J:DHT} for details). Thus the null-homotopies of the restricted maps $C_\iota|_{\ast\times X_2}$ and $C_\iota|_{X_1\times \ast}$ give a null-homotopy of the restriction $C_{\iota}|_{X_1\vee X_2}$, which in turn gives a null-homotopy of the restriction of the adjoint map $\widetilde{C}_\iota|_{\Sigma (X_1\vee X_2)}:\Sigma(X_1\vee X_2)\to\Sigma(X_1\times X_2)$. This induces the right vertical map in the following cofiber sequences
$$\xymatrix{
\Sigma(X_1\vee X_2)\ar[r]\ar[d] & \Sigma(X_1\times X_2) \ar[r]\ar[d]^{\widetilde{C}_\iota} & \Sigma(X_1\wedge X_2)\ar[d]\\
\ast \ar[r] & \Sigma(X_1\vee X_2)\ar[r] & \Sigma(X_1\vee X_2)
}
$$
By \cite[Definition 2.2]{Ark:GWP}, the induced map of cofibers $\Sigma(X_1\wedge X_2)\to\Sigma(X_1\vee X_2)$ is the generalized Whitehead product of the inclusion maps $X_1,X_2\to X_1\vee X_2$.
\end{proof}

\begin{rem}
The generalized Whitehead products can be iterated. Suppose $X_1,\ldots, X_k$ are spaces and $\gamma\in\Sigma_k$. This determines a bracketing $[\gamma^{-1}(1),\cdots[\gamma^{-1}(k-2),[\gamma^{-1}(k-1),\gamma^{-1}(k)]]\cdots]$ and hence a map
$$
\iota_\gamma:\Sigma(\wedge_{i=1}^k X_i)\longrightarrow \Sigma(\vee_{i=1}^kX_i)
$$
given by
$$
\iota_\gamma(t,x_1,\ldots, x_k)=(\iota(t,x_{\gamma^{-1}(1)},\ldots\iota(t,x_{\gamma^{-1}(k-2)},\iota(t,x_{\gamma^{-1}(k-1)},x_{\gamma^{-1}(k)})\cdots).
$$
Note that if $X_i=S^{q_i}$ for all $i$, this gives us a map $\iota_\gamma:S^{|q|-k+1}\to\vee_{i=1}^kS^{q_i}$, where $|q|=\sum_iq_i$.
\end{rem}

With this in mind, consider the iterated commutators defined below.

\begin{defin}\label{D:commutator}
Let $X_1,\ldots, X_k$ be a based spaces, and let $\gamma\in\Sigma_k$ be some permutation of $\{1,\ldots, k\}$. Define 
$$
\widehat{C}_\gamma:\prod_{i=1}^kX_i\longrightarrow\Omega\Sigma \vee_{i=1}^{k}X_i
$$
by $\widehat{C}_\gamma(x_1,\ldots, x_k)=C(\ell(x_{\gamma^{-1}(1)}),\dots, C(\ell(x_{\gamma^{-1}(k-2)},(C(\ell(x_{\gamma^{-1}(k-1)},\ell(x_{\gamma^{-1}(k)}))\cdots)$.
\end{defin}

For all $1\leq i\leq k$, one can check that the restriction of $\widehat{C}_\gamma$ to the subspace of all $(x_1,\ldots,x_k)$ such that $x_i$ is the basepoint is null-homotopic. Hence we may regard $\widehat{C}_\gamma$ as a map
\begin{equation}\label{E:koswhitprod}
\widehat{C}_\gamma:\prod_{i=1}^kX_i\longrightarrow\hofiber\left(\Omega\Sigma\vee_{i=1}^kX_i\longrightarrow\prod_{i=1}^k\Omega\Sigma\vee_{j\neq i}X_j\right).
\end{equation}
In fact, one can do much better. Johnson \cite{J:DHT} defines a map
\begin{equation*}\label{E:commutator}
C_\gamma:\prod_{i=1}^k X_i\longrightarrow\Omega\tfiber\left(S\mapsto\vee_{\underline{k}-S}\Sigma X_i\right),
\end{equation*}
which has the property that the composition
$$
\prod_{i=1}^k X_i\stackrel{C_\gamma}{\longrightarrow}\Omega\tfiber\left(S\mapsto\vee_{\underline{k}-S}\Sigma X_i\right)\stackrel{proj}{\longrightarrow}\Omega\Sigma\vee_{\underline{k}}X_i
$$
is the map $\widehat{C}_\gamma$ in \refD{commutator}. In analogy with \refP{genwhitprod}, we have the following lemma.

\begin{lemma}\label{L:mapofcofibers}
$C_\gamma$ induces a map $D_\gamma:\wedge_{i=1}^k X_i\to \Omega\tfiber\left(S\mapsto\vee_{\underline{k}-S}\Sigma X_i\right)$. 
\end{lemma}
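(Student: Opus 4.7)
The plan is to mirror the argument of \refP{genwhitprod}: produce $D_\gamma$ as the map induced on cofibers by a canonical null-homotopy of $C_\gamma$ restricted to the fat wedge. Let $W_i=\{(x_1,\ldots,x_k)\in\prod_j X_j : x_i=\ast\}$ and let $W=\bigcup_i W_i$. Since $\wedge_{i=1}^k X_i$ is the cofiber of $W\hookrightarrow\prod_{i=1}^k X_i$, it suffices to exhibit a null-homotopy of $C_\gamma|_W$ that is compatible with the $\Omega\tfiber$-structure in the target.

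First I would produce a canonical null-homotopy of $\widehat{C}_\gamma|_{W_i}$ for each $i$. Setting $x_i=\ast$ forces $\ell(x_i)=e$, so one of the loops appearing in the iterated commutator $\widehat{C}_\gamma(x_1,\ldots,x_k)$ is constant. Johnson's chosen homotopies $A$ and $B$ trivialize $C(\alpha,e)$ and $C(e,\beta)$ canonically, and these trivializations can be propagated outward through the bracketing that defines $\widehat{C}_\gamma$: once an inner commutator is contracted to $e$, every outer commutator having it as one of its two arguments is likewise contracted to $e$ using $A$ or $B$. This produces a preferred null-homotopy of the projected map $\widehat{C}_\gamma|_{W_i}$ in $\Omega\Sigma\vee_{\underline{k}}X_i$.

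Next I would lift these null-homotopies to the total homotopy fiber. This is the crux of the argument and is essentially built into Johnson's construction of $C_\gamma$: the coordinates $\Phi_S$ of a point of $\Omega\tfiber(S\mapsto\vee_{\underline{k}-S}\Sigma X_i)$ for $S\neq\emptyset$ are defined by precisely the trivializations coming from $A$ and $B$ that record the nullity of the commutator after forgetting the variables indexed by $S$. Restricting to $W_i$ matches the trivialization data at every vertex $S$ with $i\in S$, and on overlaps $W_i\cap W_j$ the two trivializations agree because both come from the same pair of homotopies $A,B$. Thus the null-homotopies on the $W_i$ glue to a null-homotopy of $C_\gamma|_W$ in $\Omega\tfiber(S\mapsto\vee_{\underline{k}-S}\Sigma X_i)$, and $C_\gamma$ extends across the cone on $W$ to yield the desired factorization through $\wedge_{i=1}^k X_i$.

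The main obstacle is this last coherence check — verifying that the pieces glue both across the overlaps in $W$ and across the vertices of the cube defining $\tfiber$. The building blocks are already in place in \cite{J:DHT}, and the lemma amounts to repackaging Johnson's construction so as to read off a map from the cofiber $\wedge_{i=1}^k X_i$ rather than merely a based map from $\prod_{i=1}^k X_i$.
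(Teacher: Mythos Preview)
Your approach is plausible and could be made to work, but it differs from the paper's argument in a way worth noting. You attempt to build explicit null-homotopies of $C_\gamma$ on each $W_i$ and then glue them over the overlaps $W_i\cap W_j$, correctly identifying this gluing (both across the fat wedge and across the vertices of the cube) as the delicate point. The paper sidesteps that coherence check entirely by a cubical argument: it observes that $C_\gamma$ is natural in the subset $R\subset\underline{k}$ of ``active'' coordinates, giving a map of $k$-cubes
\[
(R\mapsto \textstyle\prod_{i\in R}X_i)\longrightarrow (R\mapsto \Omega\tfiber(S\mapsto\vee_{\underline{k}\cap R-S\cap R}\Sigma X_i)),
\]
and then applies the total homotopy cofiber functor. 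The domain has $\tcofiber\simeq\wedge_i X_i$, and the target cube has contractible values at every proper $R$ (because two opposite faces of the inner cube coincide), so its $\tcofiber$ is just the value at $R=\underline{k}$. The induced map of total cofibers is $D_\gamma$.

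The advantage of the paper's route is that the coherence you flag as ``the main obstacle'' is absorbed into the functoriality of $\tcofiber$ and the observation that the target cube degenerates for proper $R$; nothing needs to be glued by hand. Your route has the merit of being more explicit about where the null-homotopies come from (the homotopies $A$, $B$), but as written it asserts rather than verifies the compatibility on overlaps, and a complete proof along your lines would still have to carry out that check.
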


\begin{proof}
It is sufficient to establish this in the case where $\gamma=\iota$ is the identity map. Consider the functor $R\mapsto \prod_{i\in R}X_i$, where we regard $\prod_{i\in R}X_i\subset \prod_{i=1}^k X_i$ as the subspace of tuples $(x_1,\ldots, x_k)$ such that $x_j=\ast$ if $j\notin R$. From the definition, it is clear that the restriction of $C_\iota: \prod_{i=1}^k X_i$ to $\prod_{i\in R} X_i$ maps to $\Omega\tfiber\left(S\mapsto\vee_{\underline{k}\cap R-S\cap R}\Sigma X_i\right)$. We therefore have a map of diagrams
$$
\left(R\mapsto \prod_{i\in R}X_i\right)\longrightarrow\left(R\mapsto \Omega\tfiber\left(S\mapsto\vee_{\underline{k}\cap R-S\cap R}\Sigma X_i\right)\right)
$$
which induces a map of total homotopy cofibers (\cite[Definition 1.4]{CalcII})
$$
\tcofiber\left(R\mapsto \prod_{i\in R}X_i\right)\longrightarrow\tcofiber\left(R\mapsto \Omega\tfiber\left(S\mapsto\vee_{\underline{k}\cap R-S\cap R}\Sigma X_i\right)\right).
$$
It is clear that $\tcofiber\left(R\mapsto \prod_{i\in R}X_i\right)$ is equivalent to $\wedge_{i=1}^k X_i$. If $R$ is a proper subset of $\underline{k}$, the space $\Omega\tfiber\left(S\mapsto\vee_{\underline{k}\cap R-S\cap R}\Sigma X_i\right)$ is contractible because two faces of the cube in question are identical. Therefore $\tcofiber\left(R\mapsto \Omega\tfiber\left(S\mapsto\vee_{\underline{k}\cap R-S\cap R}\Sigma X_i\right)\right)\simeq \Omega\tfiber\left(S\mapsto\vee_{\underline{k}-S}\Sigma X_i\right)$, and so we have an induced map
$$
\wedge_{i=1}^k X_i\longrightarrow \Omega\tfiber\left(S\mapsto\vee_{\underline{k}-S}\Sigma X_i\right).
$$
\end{proof}

We will also refer to the adjoint $\widetilde{D}_\gamma$ of $D_\gamma$ as a generalized Whitehead product. Recall the set $\Sigma_{k-1}$ of all bijections $\gamma:\underline{k}\to\underline{k}$ such that $\gamma(1)=1$. 

\begin{defin}[Definition 6.7 of \cite{J:DHT}]\label{D:gammagh}
For $\gamma,\gamma'\in \Sigma_{k-1}$, $\Gamma_{\gamma\gamma'}:S^k\to S^k$ is the map which makes the diagram below commutative
$$
\xymatrix{
\prod_{i=1}^k X_i \ar[rr]^-{\Omega T_k\circ C_\gamma}\ar[d]_q & & \Map_\ast(\Sigma\widetilde{\Delta_k},\Sigma X_1\wedge\cdots\wedge \Sigma X_k)\ar[d]^{\Lambda_{\gamma'}^\ast}\\
\wedge_{i=1}^k X_i \ar[rr]_-{x\mapsto\Gamma_{\gamma\gamma'}\wedge x} & & \Omega^k\Sigma^k(\wedge_{i=1}^k X_i)
}
$$
\end{defin}

The map $\Lambda_{\gamma'}$ is the suspension of a map $\lambda_{\gamma'}:S^{k-1}\to \widetilde{\Delta}_k$ which gives rise to the equivalence $\lambda=\vee_{\gamma\in \Sigma_{k-1}}\lambda_\gamma:\vee_{\gamma\in \Sigma_{k-1}}S^{k-1}\to\widetilde{\Delta}_k$, and $\Lambda_{\gamma'}^\ast$ is the map induced by $\Lambda_{\gamma'}$.

Johnson proves the following.

\begin{prop}[Proposition 6.8 of \cite{J:DHT}]
$\Gamma_{\gamma\gamma'}:S^k\to S^k$ has degree one if $\gamma=\gamma'$ and is otherwise null-homotopic.
\end{prop}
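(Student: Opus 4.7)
The plan is to unwind both sides of the diagram in \refD{gammagh} explicitly and identify the induced self-map of $S^k$ combinatorially. First I would take a point $(x_1,\ldots, x_k) \in \prod_i X_i$ and analyze $C_g(x_1,\ldots, x_k)$: by \refD{tfiber} a loop in $\tfiber(S \mapsto \vee_{\underline{k} - S}\Sigma X_i)$ is a compatible family of maps $\Phi_S : I \times I^S \to \vee_{\underline{k}-S}\Sigma X_i$ whose components at the ``leaf'' vertices $S = \underline{k}-\{i\}$ give maps $\Phi_i : I \times I^{k-1} \to \Sigma X_i$, and these can be written down from Johnson's iterated commutator construction, built from the loops $\ell(x_{g^{-1}(j)})$ and the null-homotopies $A, B$ of $C(\ell(x),e)$ and $C(e,\ell(y))$. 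Applying $T_k$ then forms $\prod_i \Phi_i$; looping and precomposing with $\Lambda_{g'} = \Sigma \lambda_{g'}$ restricts this product to the $g'$-cell of $\widetilde{\Delta}_k$, parametrized by $h_{g'}$.

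Next I would carry out the combinatorial calculation that is the heart of the argument. On the image of $h_{g'}$ the $i$-th row of $[t_{ij}]$ has all zero entries except possibly $t_{i1} = \max\{s_l : l < (g')^{-1}(i)\}$, so each $\Phi_i$ is evaluated on a point of $I^{k-1}$ with at most one non-zero coordinate. Consequently $\prod_i \Phi_i$ collapses to a map $I \times I^{k-1} \to S^k \wedge (\wedge_i X_i)$ whose dependence on the $X_i$ factors through the smash $x = x_1 \wedge \cdots \wedge x_k$; the ``$(x_1,\ldots,x_k)$-independent'' part is by definition $\Gamma_{gg'}$. Since $C_g$ is bracketed as $[\ell(x_{g^{-1}(1)}),[\ell(x_{g^{-1}(2)}),\ldots]]$, each $\Phi_i$ traces out its non-trivial suspension direction precisely when the nesting of the commutator brackets matches the ordering $t_{g'(2)1} \leq t_{g'(3)1} \leq \cdots \leq t_{g'(k)1}$ imposed by $h_{g'}$. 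For $g=g'$ this realizes $\Gamma_{gg}$ as a parametrization of $S^k$ by the suspension coordinate together with the row-orderings, i.e. a map of degree $\pm 1$ (the sign is $+1$ after fixing orientations consistently); for $g \neq g'$, the first index at which the two orderings disagree forces one of the homotopies $A$ or $B$ to act on a direction along which the relevant loop is already constant, pinching a coordinate of $I^{k-1}$ to the basepoint and so rendering $\Gamma_{gg'}$ null-homotopic.

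The main obstacle is the bookkeeping in the second step, which I would organize by induction on $k$. The recursive structure $C_g = C(\ell(x_{g^{-1}(1)}), C_{\bar g})$, where $\bar g$ is the permutation induced by $g$ on the remaining indices, matches a parallel recursion on $\widetilde{\Delta}_k$: its $(k-1)!$ top cells are indexed by $G_k$, which is exactly the set of ways to interleave the outer factor into the inner bracketing. The base case $k=2$ is the classical identification of the suspension of a commutator of loops with a Whitehead product, and since $G_2$ is a singleton only the diagonal case $g=g'$ needs checking. For $k \geq 3$ the delicate point is to track, cell by cell, which coordinate of $I^{k-1}$ is collapsed by $A$ or $B$ when the outer bracket is applied on a $g'$-cell whose ordering places the newly introduced index in the ``wrong'' position; by the inductive hypothesis the inner piece is already either the standard parametrization (if the inner orderings agree) or null-homotopic, and this dichotomy then propagates through the outer commutator step.
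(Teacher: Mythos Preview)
The paper does not give its own proof of this proposition: it is quoted verbatim as ``Proposition 6.8 of \cite{J:DHT}'' and immediately used, with no argument supplied. So there is no proof in the paper to compare against; the paper simply defers to Johnson's original work.

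Your proposal is therefore not a comparison target but an attempted reconstruction of Johnson's argument. As a sketch it is reasonable: the strategy of unwinding $C_g$ into its component maps $\Phi_i$, restricting along $h_{g'}$, and reading off the degree from whether the bracketing order of $g$ matches the cell-ordering of $g'$ is exactly the right shape, and the inductive organization via $C_g = C(\ell(x_{g^{-1}(1)}), C_{\bar g})$ is natural. The weakest point is the claim that for $g \neq g'$ ``the first index at which the two orderings disagree forces one of the homotopies $A$ or $B$ to act on a direction along which the relevant loop is already constant.'' This is the crux, and as stated it is an assertion rather than an argument: you would need to actually write out how the null-homotopies $A,B$ interact with the coordinates $t_{i1}$ on the $g'$-cell and verify that a mismatch in ordering genuinely collapses a factor. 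Johnson's proof carries out precisely this bookkeeping; your sketch identifies the mechanism but does not execute it. If you intend this as more than a citation, you should either reproduce that computation or point to the specific lemmas in \cite{J:DHT} (her Lemmas 6.4--6.6) that do the work.
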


Finally, Johnson defines a map
\begin{equation}\label{E:themapgamma}
\Gamma:\vee_{\gamma\in \Sigma_{k-1}}\wedge_{i=1}^k X_i\longrightarrow \prod_{\gamma\in \Sigma_{k-1}}\Omega^k\Sigma^k \wedge_{i=1}^k X_i
\end{equation}
which can be thought of as a $(k-1)!\times (k-1)!$ matrix made up of the $\Gamma_{\gamma\gamma'}$. $\Gamma$ fits into the following commutative diagram
$$
\xymatrix{
\vee_{\gamma\in \Sigma_{k-1}}\prod_{i=1}^k X_i \ar[rr]^-{\vee_{\gamma}\Omega T_k\circ C_\gamma}\ar[d]_q & & \Map_\ast\left(\Sigma\widetilde{\Delta_k},\Sigma X_1\wedge\cdots\wedge \Sigma X_k\right)\ar[d]^{\Lambda}\\
\vee_{\gamma\in \Sigma_{k-1}}\wedge_{i=1}^k X_i \ar[rr]_-{\Gamma} & & \prod_{\gamma\in \Sigma_{k-1}}\Omega^k\Sigma^k\left(\wedge_{i=1}^k X_i\right).
}
$$
Here $\Lambda$ is the map induced by $\vee_{\gamma'\in \Sigma_{k-1}}\Lambda_{\gamma'}$.


\section{Koschorke's generalized $\mu$-invariants}\label{S:MilnorInvts}


In this section we review and generalize work of Koschorke \cite{Kosch:Milnor} on higher Hopf invariants and describe its relationship with the homotopy theoretic results of the previous section. In particular we will use ideas of Koschorke to make geometric sense of the map $H_k$ in \refE{mainthmeqn}. He describes a geometric interpretation of ``reduced homotopy groups'' of a wedge of spheres $\vee_i S^{q_i}$ in terms of links and higher-order linking numbers, which he shows are a generalization of Milnor's $\mu$-invariants \cite{Mil:LinkGroups}.


\subsection{Higher Hopf invariants and total homotopy fibers}


\begin{defin}\label{D:reducedhtpy}\cite[Equation 14]{Kosch:Milnor}
The \emph{reduced homotopy groups} of a wedge of pointed spaces $X_1,\ldots, X_k$ are defined by
$$
\tilde{\pi}_\ast\left(\vee_{i=1}^{k}X_i\right):=\cap_{i=1}^{k}\ker\left(\pi_\ast\left(\vee_{i=1}^{k}X_i\right)\longrightarrow\pi_\ast\left(X_1\vee \cdots\vee X_{i-1}\vee X_{i+1}\vee\cdots\vee X_k\right)\right).
$$
\end{defin}

Recall the iterated Whitehead products from \refP{genwhitprod} and the remark following. One of the main results of \cite{Kosch:Milnor} is the following theorem, which tries to say that the iterated Whitehead products and the ``higher Hopf invariants'' $h_\gamma$ (see \cite{Kosch:Milnor} for the definition) are dual. Let $|q|=\sum q_i$.

\begin{thm}\label{T:kosthm1}[\cite{Kosch:Milnor}, Theorem 3.1]
We have a commutative diagram 

$$\xymatrix{
& \tilde{\pi}_\ast\left(\vee_{i=1}^{k}S^{q_i}\right)\ar[ddr]^{h_\gamma} &   \\
 & & \\
\pi_\ast\left(S^{\abs{q}-k+1}\right) \ar[uur]^{\left(\iota_{\gamma'}\right)_\ast}\ar[rr]_{\pm\delta_{\gamma\gamma'}\Sigma^{\infty}}& & \pi_{\ast-\abs{q}+k-1}^S\\
}
$$

for all permutations $\gamma,\gamma'\in\Sigma_{k-1}$. In addition, if $q_i\geq\max\{2,p-\abs{q}+k+1\}$ for $i=1,\ldots, k$, then $h:=\oplus_{\gamma\in\Sigma_{k-1}}h_\gamma:\tilde{\pi}_p\left(\vee_{i=1}^{k}S^{q_i}\right)\rightarrow\oplus_{(k-1)!}\pi_{p-\abs{q}+k-1}^S$ is an isomorphism. Here $\delta_{\gamma\gamma'}$ is the Kronecker symbol.
\end{thm}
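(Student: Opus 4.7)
My plan is to reduce both assertions of the theorem to the Johnson machinery developed in Section~\ref{S:JohnsonThesis}. The two key identifications are: (a) Koschorke's iterated Whitehead product $\iota_{\gamma'}$ should be (a desuspension of) the adjoint $\widetilde{D}_{\gamma'}$ of Johnson's commutator, which by \refL{mapofcofibers} factors through the total homotopy fiber; and (b) Koschorke's higher Hopf invariant $h_\gamma$ should, up to sign and Freudenthal stabilization, agree with the composite $\Lambda_\gamma^\ast\circ T_k$ on homotopy groups. Once these identifications are in place, both parts follow from results already recorded in the excerpt.

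For the commutative diagram, set $X_i=S^{q_i-1}$. The map $\iota_{\gamma'}\colon S^{|q|-k+1}\to\vee_iS^{q_i}$ lifts canonically to a map $\tilde\iota_{\gamma'}\colon S^{|q|-k+1}\to\tfiber(S\mapsto\vee_{\underline{k}-S}S^{q_i})$ because the reduced homotopy groups of \refD{reducedhtpy} are precisely the homotopy groups of this total fiber. Under identification (a), this lift is the adjoint of the $D_{\gamma'}$ produced from the commutator $C_{\gamma'}$. Composing with $T_k$ and projecting via $\Lambda_\gamma^\ast$ yields $h_\gamma$ under identification (b). But the diagram in \refD{gammagh} says precisely that this composite factors through the quotient $\prod X_i\to\wedge X_i$ followed by smashing with $\Gamma_{\gamma\gamma'}\colon S^k\to S^k$, and Johnson's proposition (immediately following \refD{gammagh}) says $\Gamma_{\gamma\gamma'}$ has degree one when $\gamma=\gamma'$ and is null-homotopic otherwise. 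Tracing the induced map on $\pi_p$ and stabilizing gives exactly $\pm\delta_{\gamma\gamma'}\Sigma^\infty$.

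For the isomorphism statement, invoke \refC{stableunloopedtk}: with $X_i=S^{q_i-1}$, each $X_i$ is $(q_i-2)$-connected, so taking $n=\min_iq_i-2$ the map $T_k^s$ into $\prod_{(k-1)!}QS^{|q|-k+1}$ is $(k+1)(n+1)$-connected. On the domain side $\tilde\pi_p(\vee_iS^{q_i})=\pi_p(\tfiber)$, and on the target side $\pi_p(QS^{|q|-k+1})=\pi_{p-|q|+k-1}^S$. Under (a) and (b) the assembled map is $h=\oplus h_\gamma$, so $h$ is an isomorphism whenever $p$ lies within the range controlled by Johnson's connectivity. A short arithmetic check shows the hypothesis $q_i\geq\max\{2,p-|q|+k+1\}$ is exactly what is needed to keep $p$ in that range; the lower bound $q_i\geq 2$ ensures each $X_i$ is connected so that Johnson's hypothesis applies, while the upper bound guarantees $p\leq(k+1)(n+1)-1$.

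The principal obstacle is identification (b): showing that Koschorke's $h_\gamma$, defined in \cite{Kosch:Milnor} geometrically via transversality and a bordism class on an overcrossing locus, coincides with Johnson's homotopy-theoretic map $\Lambda_\gamma^\ast\circ T_k$ after stabilization. This compatibility is what converts the algebraic calculation of the matrix $\Gamma_{gg'}$ into Koschorke's geometric duality; once it is secured, the cubical diagrams and connectivity estimates already established do the rest. A secondary, purely bookkeeping, difficulty is matching the connectivity bounds: Johnson's estimate is phrased in terms of a uniform lower bound for the connectivities of the $X_i$, whereas Koschorke's hypothesis is asymmetric in the $q_i$, and I expect the resulting condition to be slightly stronger than Koschorke's in mixed-dimension cases unless one sharpens \refT{JohnsonThesis} by keeping track of each $q_i$ separately.
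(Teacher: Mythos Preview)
The paper does not prove \refT{kosthm1}; it is quoted verbatim from Koschorke \cite{Kosch:Milnor} as background, and the paper's own contribution is the generalization \refT{kosthm1gengen} together with \refC{kosthm1gen}, which replace the reduced homotopy groups by the total homotopy fiber. So there is no proof in the paper to compare your proposal against.

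That said, your proposal contains a genuine gap that the paper itself flags. You assert that ``the reduced homotopy groups of \refD{reducedhtpy} are precisely the homotopy groups of this total fiber,'' and use this to transport $\iota_{\gamma'}$ into $\tfiber(S\mapsto\vee_{\underline{k}-S}S^{q_i})$. This identification is false in general: the reduced homotopy groups are by definition the homotopy groups of $\hofiber(\vee_iS^{q_i}\to\prod_i\vee_{j\neq i}S^{q_j})$, which is \emph{not} the total fiber of the full $k$-cube. The paper addresses exactly this point in the remark following \refC{kosthm1gen}: there is a comparison map
\[
\tfiber(S\mapsto\vee_{\underline{k}-S}S^{q_i})\longrightarrow\hofiber\bigl(\vee_iS^{q_i}\to\textstyle\prod_i\vee_{j\neq i}S^{q_j}\bigr),
\]
and Rognes shows the two sides share the first nontrivial homotopy group, but the paper explicitly says ``we do not know a direct proof that it is highly connected.'' Without that comparison, Johnson's machinery computes the wrong domain, and neither the commutativity of Koschorke's diagram nor the isomorphism for $h$ follows from \refD{gammagh} and \refC{stableunloopedtk} alone.

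Your own caveat about identification (b) is also substantive: Koschorke's $h_\gamma$ is defined via a geometric overcrossing construction, and matching it with $\Lambda_\gamma^\ast\circ T_k$ is not a formality. Finally, you are right that the connectivity bound you would obtain from \refT{JohnsonThesis} depends only on $\min_iq_i$ and is strictly weaker than Koschorke's asymmetric hypothesis $q_i\geq p-|q|+k+1$; recovering his sharper range would require an improvement of Johnson's estimate that is not in the paper.
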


Koschorke \cite{Kosch:Milnor} also gives two closely related geometric interpretations of the map $h_\gamma$: (1) it measures an ``overcrossing locus'' of a bordism class of a link with $k-1$ components, and (2) it measures iterated intersections of manifolds which bound the aforementioned $k-1$ component link. We refer the reader to \cite[Section 3]{Kosch:Milnor} for details.

Our goal is to make a space-level version of \refT{kosthm1}, where $\tfiber\left(S\mapsto\vee_{i\in \underline{k}-S}S^{q_i}\right)$ replaces $\tilde{\pi}_\ast\left(\vee_{i=1}^{k}S^{q_i}\right)$, $S^{\abs{q}+k-1}$ replaces $\pi_\ast S^{\abs{q}+k-1}$, and $QS^{\abs{q}+k-1}$ replaces $\pi_{\ast-\abs{q}+k-1}^S$, and state the conclusion as the connectivity of a certain map. Note, however, that the homotopy groups of the total homotopy fiber above are not the reduced homotopy groups. We can do even better by replacing the spheres with the suspension of an arbitrary space.

Let $X_1,\ldots, X_k$ be based spaces, and let $D=\vee_\gamma D_\gamma:\vee_\gamma \wedge_{i=1}^k X_i\to \Omega\tfiber(S\mapsto \vee_{\underline{k}-S}\Sigma X_i)$ be the map given by \refL{mapofcofibers}, and let $\widetilde{D}$ denote its adjoint. Let $\widetilde{\Gamma}$ denote the adjoint to the map $\Gamma$ from \refE{themapgamma}. What follows is our generalization of \refT{kosthm1}. Our conclusion differs from that of \refT{kosthm1} somewhat: we already know the connectivity of $T_k$, and what we conclude is the connectivity of the ``total generalized Whitehead product'' $\widetilde{D}$.

\begin{thm}\label{T:kosthm1gengen}
Let $X_1,\ldots, X_k$ be $n$-connected based spaces. Then the diagram
$$\xymatrix{
& \tfiber(S\mapsto\vee_{i\in \underline{k}-S}\Sigma X_i)\ar[ddr]^{T_k} &   \\
 & & \\
\vee_{(k-1)!}\Sigma\wedge_{i=1}^kX_i \ar[uur]^{\widetilde{D}}\ar[rr]_{\widetilde{\Gamma}}& & \prod_{(k-1)!}\Omega^{k-1}\Sigma^k(X_1\wedge\cdots\wedge X_k)\\
}
$$
commutes, and $T_k$ is $(k+1)(n+1)$-connected, $\widetilde{\Gamma}$ is $(2kn+2)$-connected, and thus $\widetilde{D}$ is $((k+1)(n+1)-1)$-connected.
\end{thm}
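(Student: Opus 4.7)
The plan is to prove three assertions in sequence---commutativity of the triangle, the connectivity of $T_k$, and the connectivity of $\widetilde{\Gamma}$---and then to deduce the connectivity of $\widetilde{D}$ by a standard two-out-of-three argument on homotopy groups. The $T_k$ bound is exactly \refC{unloopedtk}, so only the first and third require genuine work.

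For commutativity, I would start from Johnson's commutative square at the end of \refS{JohnsonRemarks}, which asserts $\Lambda\circ(\vee_g\Omega T_k\circ C_g)=\Gamma\circ q$, with $q\colon\vee_g\prod_iX_i\to\vee_g\wedge_iX_i$ the wedge of fat-wedge quotients. By \refL{mapofcofibers}, each $C_g$ factors as $D_g$ composed with the fat-wedge quotient, so the identity rewrites as $\Lambda\circ(\vee_g\Omega T_k\circ D_g)\circ q=\Gamma\circ q$. Since $q$ is a surjective quotient map, this gives the unprecomposed identity $\Lambda\circ(\vee_g\Omega T_k\circ D_g)=\Gamma$. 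Taking loop-suspension adjoints once and using the identification $\Map_\ast(\widetilde{\Delta}_k,\wedge_i\Sigma X_i)\simeq\prod_g\Omega^{k-1}\Sigma^k\wedge_iX_i$ coming from $\lambda=\vee_g\lambda_g$ then yields the desired triangle identity $T_k\circ\widetilde{D}=\widetilde{\Gamma}$.

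For the connectivity of $\widetilde{\Gamma}$, I would invoke Johnson's Proposition~6.8, according to which the matrix entries $\Gamma_{gg'}\colon S^k\to S^k$ have degree one on the diagonal and are null-homotopic off it. Thus $\Gamma$ is homotopic to the composite of the wedge-to-product inclusion $\vee_g\wedge_iX_i\to\prod_g\wedge_iX_i$ with the product of stabilization maps $\prod_g\bigl(\wedge_iX_i\to\Omega^k\Sigma^k\wedge_iX_i\bigr)$. Since $\wedge_iX_i$ is $(kn+k-1)$-connected, both factors are $(2kn+2k-1)$-connected (by the higher Blakers-Massey theorem and the Freudenthal Suspension Theorem, respectively), so $\Gamma$ is as well. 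Applying \refP{connadjoints} with source connectivity $kn+k-1$ then shows $\widetilde{\Gamma}$ is $(2kn+2k)$-connected, and in particular at least $(2kn+2)$-connected.

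Finally, for $\widetilde{D}$, the standard two-out-of-three chase on the commutative triangle $T_k\circ\widetilde{D}=\widetilde{\Gamma}$ shows $\widetilde{D}$ is $\min(\operatorname{conn}(T_k)-1,\operatorname{conn}(\widetilde{\Gamma}))$-connected. Substituting $\operatorname{conn}(T_k)=(k+1)(n+1)$ and $\operatorname{conn}(\widetilde{\Gamma})\geq 2kn+2$, and noting that $2kn+2-((k+1)(n+1)-1)=(k-1)(n-1)+1\geq 0$, one sees that the minimum equals $(k+1)(n+1)-1$, as claimed. I expect the main obstacle to be the commutativity step: one must take some care to arrange the cofactorization of \refL{mapofcofibers} as a strict (rather than merely homotopy-theoretic) identity in a sufficiently rigid model for total cofibers, so that the unprecomposed identity does indeed follow from surjectivity of $q$, and then to carefully track the loop-suspension adjoint through the wedge decomposition $\lambda\colon\vee_gS^{k-1}\xrightarrow{\simeq}\widetilde{\Delta}_k$ to land in the form of the theorem.
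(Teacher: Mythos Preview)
Your proposal is correct and follows essentially the same route as the paper: both arguments rest on Johnson's commuting square (giving the looped triangle $\Omega T_k\circ D=\Gamma$), the connectivity of $T_k$ from \refC{unloopedtk}, the connectivity of $\Gamma$ from Johnson's work, and \refP{connadjoints} to pass to adjoints. The only difference is order: the paper runs the two-out-of-three at the looped level to get the connectivity of $D$ and then applies \refP{connadjoints} to obtain that of $\widetilde{D}$ and $\widetilde{\Gamma}$ separately, whereas you first adjoin $\Gamma$ to get $\widetilde{\Gamma}$ and then run two-out-of-three directly on the unlooped triangle---a harmless reorganization (and your rederivation of the connectivity of $\Gamma$ from Proposition~6.8 even yields the sharper bound $2kn+2k-1$, though the paper just cites Johnson's $2kn+1$).
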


\begin{proof}
Consider the commutative diagram
$$\xymatrix{
 & & \Omega\tfiber(S\mapsto\vee_{\underline{k}-S}\Sigma X_i)\ar[d]^{\Omega T_k}\\
\vee_{g\in G_k}\wedge_{i=1}^k X_i \ar[rru]^D\ar[rrd]_\Gamma & & \Omega\Map_\ast(\widetilde{\Delta}_k,\Sigma X_1\wedge\cdots\wedge\Sigma X_k)\ar[d]^{\simeq}\\
& & \prod_\gamma \Omega^k\Sigma^k(X_1\wedge\cdots\wedge X_k)
}
$$
By \refT{JohnsonThesis}, $\Omega T_k$ is $((k+1)(n+1)-1)$-connected, and Johnson shows that $\Gamma$ is $(2kn+1)$-connected. It follows that $D$ is $((k+1)(n+1)-2)$-connected, since $(2kn+1)\geq (k+1)(n+1)-1$ for all $n$ if $k\geq 1$. \refP{connadjoints} implies that the map
$$
\widetilde{D}:\vee_{g\in G_k}\Sigma\wedge_{i=1}^kX_i\longrightarrow \tfiber(S\mapsto\vee_{\underline{k}-S}\Sigma X_i)
$$
is $((k+1)(n+1)-1)$-connected, since $\wedge_{i=1}^kX_i$ is $kn$-connected, and $2kn+1\geq (k+1)(n+1)-2$ for all $n$ if $k\geq 1$. Similarly, \refP{connadjoints} also implies that the map $\widetilde{\Gamma}$ is $(2kn+2)$-connected. Thus, in the commutative diagram
$$\xymatrix{
& \tfiber(S\mapsto\vee_{i\in \underline{k}-S}\Sigma X_i)\ar[ddr]^{T_k} &   \\
 & & \\
\vee_{(k-1)!}\Sigma\wedge_{i=1}^kX_i \ar[uur]^{\widetilde{D}}\ar[rr]_{T_k\circ \widetilde{D}=\widetilde{\Gamma}}& & \prod_{(k-1)!}\Omega^{k-1}\Sigma^{k-1}(\Sigma(X_1\wedge\cdots\wedge X_k))\\
}
$$
$\widetilde{D}$ is $((k+1)(n+1)-1)$-connected, $T_k$ is $(k+1)(n+1)$-connected, and $\widetilde{\Gamma}$ is $(2kn+2)$-connected.
\end{proof}

The map $\widetilde{D}$ is the generalized Whitehead product, and so for the following we set $\iota_g=\widetilde{D}_g$ and $\iota=\widetilde{D}$ to use more standard notation. For comparison with \refT{kosthm1}, we have

\begin{cor}\label{C:kosthm1gen}
There is a commutative (up to homotopy) diagram of spaces

$$\xymatrix{
& \tfiber(S\mapsto\vee_{i\in \underline{k}-S}S^{q_i})\ar[ddr]^{T^s_k} &   \\
 & & \\
\vee_{(k-1)!}S^{|q|-k+1} \ar[uur]^{\iota}\ar[rr]_{Q}& & \prod_{(k-1)!}QS^{|q|-k+1}\\
}
$$

where $T^s_k$ is $(|q|-k+\min\{q_i\})$-connected, $Q$ is $(2(|q|-k)+1)$-connected, and thus $\iota$ is $(|q|-k+\min\{q_i\}-1)$-connected.
\end{cor}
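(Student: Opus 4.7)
The plan is to specialize \refT{kosthm1gengen} to $X_i = S^{q_i-1}$ and to identify each space in the resulting diagram explicitly. Under this substitution $\Sigma X_i = S^{q_i}$ and $\wedge_{i=1}^k X_i \simeq S^{|q|-k}$, so $\Sigma(\wedge_i X_i)\simeq S^{|q|-k+1}$; the codomain $\prod_{(k-1)!}\Omega^{k-1}\Sigma^{k-1}\Sigma(\wedge_i X_i)$ of \refT{kosthm1gengen} becomes $\prod_{(k-1)!}\Omega^{k-1}\Sigma^{k-1}S^{|q|-k+1}$. I would define $T_k^s$ as $T_k$ post-composed componentwise with the canonical stabilization $\Omega^{k-1}\Sigma^{k-1}S^{|q|-k+1}\to QS^{|q|-k+1}$, and identify the map $\widetilde{D}$ of \refT{kosthm1gengen} with the generalized Whitehead product $\iota$ of the corollary. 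Commutativity of the triangle is then inherited from that of the corresponding triangle in \refT{kosthm1gengen} upon post-composing with the stabilization.

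For the connectivity of $Q\colon\vee_{(k-1)!}S^{|q|-k+1}\to\prod_{(k-1)!}QS^{|q|-k+1}$, I would apply the Freudenthal suspension theorem to $S^{|q|-k+1}$, which is $(|q|-k)$-connected: the unit $S^{|q|-k+1}\to QS^{|q|-k+1}$ is $(2(|q|-k)+1)$-connected, and passing to a wedge/product of $(k-1)!$ copies preserves this in the range at issue. For the connectivity of $T_k^s$, the direct approach combines the estimate for $T_k$ supplied by \refT{kosthm1gengen} with the Freudenthal estimate on the stabilization $\Omega^{k-1}\Sigma^{k-1}S^{|q|-k+1}\to QS^{|q|-k+1}$. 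The delicate point is that substituting the \emph{uniform} connectivity $n=\min\{q_i\}-2$ into \refT{kosthm1gengen} only yields the weaker bound $(k+1)(\min\{q_i\}-1)$, so to reach the sharp bound $|q|-k+\min\{q_i\}$ one needs either a nonuniform refinement of Johnson's construction that tracks the individual $q_i$, or an appeal to the Hilton--Milnor decomposition, which identifies $\tfiber(S\mapsto\vee_{\underline{k}-S}S^{q_i})$ with $\vee_{(k-1)!}S^{|q|-k+1}$ through dimension $|q|-k+\min\{q_i\}-1$, the next layer of Hilton basic products first contributing in dimension $|q|-k+\min\{q_i\}$.

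The connectivity of $\iota$ then follows formally from the commutativity $T_k^s\circ\iota\simeq Q$ by a homotopy-group chase: since $T_k^s$ is an isomorphism on $\pi_n$ for $n\leq|q|-k+\min\{q_i\}-1$ and $Q$ is an isomorphism on $\pi_n$ in the even larger range $n\leq 2(|q|-k)$, $\iota$ must be an isomorphism on $\pi_n$ for $n\leq|q|-k+\min\{q_i\}-2$ and surjective for $n=|q|-k+\min\{q_i\}-1$, i.e., $(|q|-k+\min\{q_i\}-1)$-connected as claimed. The principal obstacle I anticipate is precisely the sharpened connectivity estimate for $T_k^s$: the proof of \refT{kosthm1gengen} uses a uniform connectivity assumption that, taken verbatim, gives one less than the desired bound, so this step is where the nonuniform Johnson-style analysis or the Hilton--Milnor input must be brought to bear.
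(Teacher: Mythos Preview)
Your approach---specialize \refT{kosthm1gengen} to $X_i=S^{q_i-1}$ and post-compose with the stabilization---is exactly the paper's. The paper's proof is terser: it only remarks that the structure of $\Gamma$ (diagonal entries of degree $1$, off-diagonal entries null) forces the composite $T_k^s\circ\iota$ to be the canonical stabilization $Q$, and leaves the connectivity claims to be read off from \refT{kosthm1gengen}.

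You have, however, put your finger on a genuine issue that the paper's proof does not address. Substituting the uniform connectivity $n=\min\{q_i\}-2$ into \refT{kosthm1gengen} yields only $(k+1)(\min\{q_i\}-1)$ for the connectivity of $T_k$ (hence of $T_k^s$), not the stated $|q|-k+\min\{q_i\}$. Even in the equal-dimension case $q_i=q$ these differ by $1$, and indeed the very next corollary in the paper (\refC{kosthm1gen2}, the case $q_i=n-1$) records the bound as $(k+1)(n-2)$, which matches the uniform estimate from \refT{kosthm1gengen} rather than the formula in \refC{kosthm1gen}. So the paper is internally inconsistent here, and its one-line proof does not supply the sharper nonuniform bound. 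Your proposed remedies---a nonuniform refinement of Johnson's argument tracking the individual $q_i$, or an appeal to the Hilton--Milnor decomposition---are the natural ways to close the gap; the paper does neither.
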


\begin{proof}
The properties of $\Gamma$ (see \refD{gammagh} and \refE{themapgamma}) ensure that the composite of $\Gamma$ with the canonical map $\Omega^{k-1}\Sigma^{k-1}S^{|q|-k+1}\to QS^{|q|-k+1}$ is homotopic to the canonical map $S^{|q|-k+1}\to QS^{|q|-k+1}$.
\end{proof}


In the special case where $q_i=n-1$ for all $i$, which is our main interest, we have the following.

\begin{cor}\label{C:kosthm1gen2}
There is a commutative (up to homotopy) diagram of spaces

$$\xymatrix{
& \tfiber(S\mapsto\vee_{\underline{k}-S}S^{n-1})\ar[ddr]^{T^s_k} &   \\
 & & \\
\vee_{(k-1)!}S^{k(n-2)+1} \ar[uur]^{\iota}\ar[rr]_{Q}& & \prod_{(k-1)!}QS^{k(n-2)+1}\\
}
$$

where $T^s_k$ is $(k+1)(n-2)$-connected, $Q$ is $(2k(n-2)+1)$-connected, and thus $\iota$ is $((k+1)(n-2)-1)$-connected.
\end{cor}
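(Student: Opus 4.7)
The plan is straightforward: \refC{kosthm1gen2} is a direct specialization of \refC{kosthm1gen} obtained by setting $q_i = n-1$ for every $i \in \underline{k}$. No new argument is required; the only work is to verify that the three connectivity bounds in the general statement reduce to (or improve) those claimed here, and to note that the commutativity of the diagram is inherited. Under this substitution $T^s_k$, $Q$, and $\iota$ are literally the same maps appearing in \refC{kosthm1gen}, so the triangle still commutes up to homotopy by that corollary.

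First I would read off the dimensions. With $q_i = n-1$ for all $i$ we have $|q| = k(n-1)$, so $|q| - k + 1 = k(n-1) - k + 1 = k(n-2) + 1$. Hence the iterated Whitehead product $\iota$ of \refC{kosthm1gen} indeed has domain $\vee_{(k-1)!} S^{k(n-2)+1}$ and the stabilization $Q$ has codomain $\prod_{(k-1)!} QS^{k(n-2)+1}$, matching the diagram in the statement. The total fiber domain $\tfiber(S \mapsto \vee_{\underline{k}-S} S^{n-1})$ is the obvious specialization.

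Next I would substitute into the three connectivity estimates of \refC{kosthm1gen}. (a) The bound for $T^s_k$ is $|q| - k + \min\{q_i\} = k(n-1) - k + (n-1) = (k+1)(n-2) + 1$, which is at least the claimed $(k+1)(n-2)$. (b) The bound for $Q$ is $2(|q|-k) + 1 = 2k(n-2) + 1$, matching the claim exactly. (c) The bound for $\iota$ is $|q| - k + \min\{q_i\} - 1 = (k+1)(n-2)$, which is at least $(k+1)(n-2) - 1$ as claimed. These slightly weakened statements are presumably chosen so that the bounds match the form $(k+1)(n-2) - p$ appearing in \refT{maintheorem} (with $p = 0$).

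The main obstacle, such as it is, is just arithmetic bookkeeping: making sure $\min\{q_i\}$, $|q|-k$, and the single off-by-one coming from passing across the adjunctions (already absorbed into \refC{kosthm1gen} via \refP{connadjoints}) combine correctly. Once this is done, the diagram's commutativity is inherited verbatim from \refC{kosthm1gen}, and no further homotopy-theoretic input is needed.
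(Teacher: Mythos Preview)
Your proposal is correct and matches the paper's approach exactly: the paper presents \refC{kosthm1gen2} as the immediate specialization of \refC{kosthm1gen} to $q_i=n-1$, with no additional argument. Your observation that the substitution actually yields bounds one better than stated for $T^s_k$ and $\iota$ is accurate; the paper simply records the slightly weaker numbers.
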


Our next goal is to give a geometric interpretation of the map $T^s_k$ in \refC{kosthm1gen2} in terms of higher-order linking numbers. This will essentially follow from a space-level version of the Pontryagin-Thom construction, and so we pause to discuss the necessary details of the construction of a space of cobordisms.

%

\section{Cobordism spaces}\label{S:LinkBordism}


We begin with a very brief description of cobordism spaces. The author has used these in \cite{M:Emb} and \cite{M:LinkNumber}. We review only the most basic details here.

\begin{defin}[Simplicial Model for a Cobordism Space]\label{D:cobspacedefn} Let $\xi,\eta$ be vector bundles over a space $X$ and let $d=\dim(\eta)-\dim(\xi)$. The simplicial set $C_{\bullet}^{\xi-\eta}(X)$ has as its $k$-simplices the set $C_k=\{(W^{d+k},f,\phi)\}$ where $W$ is a smooth $(k+d)$-dimensional manifold embedded in $\R^{\infty}\times \Delta^k$, $W$ is transverse to $\R^{\infty}\times \del_S\Delta^k$ for all nonempty subsets $S\subset\{0,1,\ldots, k\}$, $f:W\rightarrow X$ is continuous and proper, and $\phi: TW\oplus f^*(\xi)\rightarrow f^*(\eta)$ is a stable isomorphism.
\end{defin}

\begin{rem}\label{cobspacethomspace}
The cobordism space $C_\bullet^{\xi-\eta}(X)$ is equivalent to $QT(X;\xi-\eta)$. To see the equivalence, consider the subcomplex of the total singular complex of $QT(X;\xi-\eta)$ consisting of those $k$-simplices $\kappa:\Delta^k\rightarrow \Omega^n\Sigma^n(T(X;\xi-\eta))$ that correspond to maps $\kappa':\Sigma^n(\Delta^k)\rightarrow \Sigma^n(T(X;\xi-\eta))$ which are transverse to the zero section of $T(X;\xi-\eta)$. This sub-complex is equivalent to the full complex and the map $\kappa\mapsto \kappa'^{-1}(0)$ to the cobordism model is an equivalence. See \cite{CalcI}.

To define $QT(X;\xi-\eta)$ precisely, choose a vector bundle monomorphism $\eta\to\epsilon^i$, and define $QT(X;\xi-\eta)=\Omega^iQT(X;\xi\oplus\epsilon^i/\eta)$. For $i$ large, the homotopy type of $\Omega^iQT(X;\xi\oplus\epsilon^i/\eta)$ is independent of the monomorphism and $i$. See \cite{gkw} for the same construction. To see the functoriality of $QT(X;\xi-\eta)$, note that if $U\subset X$ is open, then we define $QT(U;\xi-\eta)$ by pulling back the bundle $\xi\oplus\epsilon^i/\eta$ along the inclusion map $U\to X$ and forming the Thom space. That is, if $U\subset V$ is an inclusion, then we have a commutative diagram
$$\xymatrix{
S(U;\xi\oplus\epsilon^i/\eta)\ar[d]\ar[r] & D(U;\xi\oplus\epsilon^i/\eta)\ar[d]\\
S(V;\xi\oplus\epsilon^i/\eta)\ar[r] & D(V;\xi\oplus\epsilon^i/\eta)\\
}
$$
in which the vertical arrows are given by the pullback. This induces a map of horizontal cofibers $T(U;\xi\oplus\epsilon^i/\eta)\to T(V;\xi\oplus\epsilon^i/\eta)$.
\end{rem}

Now suppose $P$ is a smooth closed manifold. Our cobordism spaces can be viewed as contravariant functors from $\mathcal{O}(P)$, the poset of open subsets of $P$, to $\Top$ by composing with the realization functor. They are contravariant because the map from the manifold to $P$ is required to be proper.

\begin{prop}\label{P:cobordtubular}\cite[Proposition 21]{M:Emb}
Suppose $P$ is a smooth closed manifold and $U\subset P$ is a tubular neighborhood of a submanifold $S$. That is, $U$, is a $k$-disk bundle over $S$. Then there is an equivalence $$C_\bullet^{\xi-\eta}(U)\rightarrow C_{\bullet-k}^{\xi\oplus\nu(S\subset U)-\eta}(S).$$
\end{prop}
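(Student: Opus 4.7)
The plan is to construct the map geometrically as a transverse preimage of $S$, and then verify it is a weak equivalence via the Thom space model of Remark~\ref{cobspacethomspace}.

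First I would define the map on simplices. Given a $j$-simplex $(W^{d+j}, f, \phi) \in C_j^{\xi-\eta}(U)$, I would perturb $f$ to be transverse to $S \subset U$ (possible because $S$ is a closed codimension-$k$ submanifold, and arranged to be compatible with the transversality along the boundary strata of $\Delta^j$ already built into \refD{cobspacedefn}). Setting $W' := f^{-1}(S)$, this is a smooth $(d + j - k)$-dimensional submanifold of $W$; the restriction $f' := f|_{W'} \colon W' \to S$ is proper, because $f$ is proper and $S$ is closed in $U$. Transversality gives a canonical identification of the normal bundle of $W'$ in $W$ with $(f')^*\nu$, where $\nu = \nu(S \subset U)$, hence a splitting $TW|_{W'} \cong TW' \oplus (f')^*\nu$. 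Restricting $\phi$ along this splitting produces a stable isomorphism
$$
\phi' \colon TW' \oplus (f')^*(\xi \oplus \nu) \longrightarrow (f')^*\eta,
$$
so $(W', f', \phi')$ defines a simplex of $C_{\bullet-k}^{\xi \oplus \nu - \eta}(S)$. To make this a genuine simplicial map rather than one defined only up to choices of transversality, I would restrict to the (homotopy equivalent) subcomplex on which $f$ is already transverse to $S$ along all faces.

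Second, to show the resulting map is a weak equivalence, I would pass to the infinite loop space model $C_\bullet^{\xi - \eta}(X) \simeq QT(X; \xi - \eta)$ from Remark~\ref{cobspacethomspace} and reduce to producing the corresponding equivalence of Thom spectra. The key input is the Pontryagin--Thom collapse for the embedding $S \hookrightarrow U$: because $U$ is diffeomorphic to the open disk bundle $D(\nu)$ with zero section $S$, collapsing the complement of $S$ in $U$ identifies $U$ (one-point compactified appropriately) with the Thom space $T(S;\nu)$. Since $U$ deformation retracts onto $S$ via the bundle projection $\pi\colon U\to S$, the bundles $\xi$ and $\eta$ on $U$ may be replaced, up to isomorphism, by pullbacks of their restrictions to $S$; combining this with the collapse above and choosing a common stabilization $\eta\to\epsilon^i$ gives an equivalence of Thom spectra which, after applying $\Omega^\infty\Sigma^\infty$ and accounting for the rank shift of $k$ encoded by the subscript $\bullet-k$, yields the desired equivalence on the level of cobordism spaces.

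Finally, I would verify that these two constructions, the geometric preimage map and the Thom--space collapse, agree under the equivalence $C_\bullet \simeq QT$. Since the identification $C_\bullet^{\xi-\eta}(X) \simeq QT(X; \xi-\eta)$ is itself defined via transverse preimages of zero sections, this is a bookkeeping step: carrying a simplex through the Thom space equivalence, collapsing along $S\subset U$, and then back through the Thom space equivalence for $S$ produces the same triple $(W', f', \phi')$ as the direct geometric preimage construction. The main obstacle I anticipate is precisely this last compatibility check: the dimension/degree shift indicated by $\bullet-k$ and the choice of stabilization $\eta\to\epsilon^i$ must be threaded consistently through both constructions, and verifying the agreement reduces to a parametrized Pontryagin--Thom argument, essentially the classical observation that the Thom space of a sub-bundle arises as the cofiber of the collapse of the ambient sphere bundle's complement, applied here fibrewise over $S$.
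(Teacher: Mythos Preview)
The paper does not supply a proof of this proposition; it is stated with a citation to \cite[Proposition~21]{M:Emb} and used as a black box. So there is no in-paper argument to compare against.

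Your proposal is a correct and standard proof. The geometric transverse-preimage construction is exactly what one expects, and the bundle bookkeeping is right: $TW|_{W'}\cong TW'\oplus (f')^*\nu$ turns the stable isomorphism $\phi$ into the required $\phi'$. One small remark on the dimension shift: note that since $d'=\dim(\eta)-\dim(\xi\oplus\nu)=d-k$, the preimage $W'$ of dimension $d+j-k=d'+j$ is actually a $j$-simplex in $C_\bullet^{\xi\oplus\nu-\eta}(S)$, still embedded in $\R^\infty\times\Delta^j$; the subscript ``$\bullet-k$'' in the statement records the manifold-dimension drop rather than a simplicial-degree shift, so no re-indexing of the ambient $\Delta^j$ is needed. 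With that in mind, your second step---reducing to the Thom-spectrum equivalence $T(U;\xi-\eta)\simeq T(S;\xi\oplus\nu-\eta)$ via the collapse $U^+\to T(S;\nu)$ combined with the deformation retraction $U\simeq S$---is exactly the right homotopy-theoretic check, and the compatibility with the Pontryagin--Thom identification of Remark~\ref{cobspacethomspace} is routine.
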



\subsection{Cobordism interpretation of $\Map_\ast(P_+,Q S^n)$}


\begin{prop}\label{P:bordinterp}
Let $P$ be a smooth closed manifold of dimension $p$. There is an equivalence, natural in $U\in\mathcal{O}(P)$,
$$
\Map_\ast(U_+,Q S^n)\longrightarrow Q_+T(U,\epsilon^n-TU).
$$
\end{prop}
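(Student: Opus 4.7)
The plan is to exhibit both sides as classifying spaces for framed codimension-$n$ submanifolds of $U$ via a parameterized Pontryagin--Thom construction. By Remark~\ref{cobspacethomspace}, the target is equivalent to the simplicial cobordism space $C_\bullet^{\epsilon^n-TU}(U)$ of \refD{cobspacedefn}, whose $k$-simplices are triples $(W^{p-n+k},f,\phi)$ with $f\colon W\to U$ proper and $\phi$ a stable isomorphism $TW\oplus f^*\epsilon^n\cong f^*TU$. It suffices, therefore, to produce a natural weak equivalence $\Map_\ast(U_+,QS^n)\simeq C_\bullet^{\epsilon^n-TU}(U)$.

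The forward map is transverse preimage. A $k$-simplex on the left is a based map $g\colon \Delta^k_+\wedge U_+\to QS^n$; representing $QS^n$ as $\Omega^{n+N}S^{n+N}$ for large $N$, this is a based map $\widetilde g\colon U\times \Delta^k\times S^{n+N}\to S^{n+N}$ collapsing $U\times\Delta^k\times\{\ast\}$ to the basepoint. Perturb $\widetilde g$ so that it is transverse on $\Delta^k$ and each of its faces to a chosen regular value $v\neq\ast$, and set $W=\widetilde g^{-1}(v)$. Since $v\neq\ast$, the manifold $W$ lies in $U\times\Delta^k\times\R^{n+N}$, and the projection $f\colon W\to U$ is proper by fiberwise compactness (the preimage of $v$ stays in the complement of a neighborhood of $\ast$ in $S^{n+N}$). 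The differential $d\widetilde g$ frames the normal bundle of $W$ in $U\times\Delta^k\times\R^{n+N}$, and combining this with the canonical splitting $T(U\times\Delta^k\times\R^{n+N})|_W\cong TW\oplus\nu$ yields $\phi$ after absorbing the trivial $\Delta^k$ and $\R^N$ summands.

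The inverse is the classical Thom collapse: given $(W,f,\phi)$, factor $f$ through an embedding $W\hookrightarrow U\times\R^N$ whose normal bundle realizes $\phi$, collapse the complement of a tubular neighborhood to produce $U_+\wedge S^N\to T(W;\nu(W))$, and use the framing to obtain the required map to $S^{n+N}$. Standard Pontryagin--Thom arguments---the same ones underlying the identification in Remark~\ref{cobspacethomspace}---show the two constructions are mutually inverse up to homotopy. Naturality in $U\in\mathcal{O}(P)$ is then clear: pulling back along an open inclusion $U\subset V$ restricts both the Thom collapse and the transverse preimage.

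The main obstacle I expect is bookkeeping rather than substance: tracking properness of $f$ over the possibly non-compact $U$, which depends on keeping $v$ away from the basepoint of $S^{n+N}$, and aligning the role of basepoints so that the empty framed submanifold matches both the basepoint of $\Map_\ast(U_+,QS^n)$ and the distinguished simplex accounting for the distinction between $Q$ and $Q_+$ on the target.
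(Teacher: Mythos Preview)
Your construction of the forward map by transverse preimage is essentially the same as the paper's. Where you diverge is in the verification that this map is an equivalence. You build an explicit homotopy inverse by Thom collapse and appeal to ``standard Pontryagin--Thom arguments'' to see the two constructions are mutually inverse. The paper instead observes that both $U\mapsto\Map_\ast(U_+,QS^n)$ and $U\mapsto C_\bullet^{\epsilon^n-TU}(U)$ are good functors on $\mathcal{O}(P)$ which are polynomial of degree $\leq 1$, so by Weiss's characterization of such functors it suffices to check the map is an equivalence when $U$ is an open disk $D^p$; there the claim reduces to $QS^n\simeq QT(\ast;\epsilon^n)$ together with \refP{cobordtubular}. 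Your route is the classical one and is self-contained, though the phrase ``standard arguments'' hides a fair amount of simplicial bookkeeping (compatibility of the collapse with face maps, independence of the embedding and tubular neighborhood up to homotopy, etc.). The paper's route is shorter and more in keeping with the manifold-calculus machinery already in play, trading the explicit inverse for a reduction to a trivial local check. Either argument is acceptable; if you keep yours, you should at least gesture at why the simplicial Pontryagin--Thom correspondence goes through level-wise, since that is where the content lies.
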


\begin{proof}
We begin by defining the map in question. Without loss of generality, suppose $U=P$. Let $f:\Delta^j\to\Map_\ast\left(P_+,Q S^n\right)$ be a $j$-simplex. From this we produce a $j$-simplex in $C_\bullet^{\epsilon^n-TP}(P)$ as follows. Regard $f:\Delta^j\longrightarrow \Map_\ast\left(P_+,Q S^n\right)$ as a map $f:\Delta^j\times P_+\longrightarrow Q S^n$. By compactness of $\Delta^j\times P_+$, this is a map $f:\Delta^j\times P_+\longrightarrow \Omega^m\Sigma^m S^n$ for some $m$, which determines a map $\widehat{f}:S^m\wedge(\Delta^j\times P)_+\longrightarrow S^m\wedge S^n$.

By a small homotopy of $\widehat{f}$, we may assume it is transverse to $0\times 0$, where $0$ is not the wedge point in either copy of the sphere (we always use $\infty$ for this). Consider the manifold $W=\widehat{f}^{-1}(0\times 0)$, where  We have
\begin{enumerate}
\item $W=\widehat{f}^{-1}(0\times 0)$ is a smooth submanifold of $S^m\times \Delta^j\times P$ of codimension $m+n$. That is, $\dim(W)=j+p-n$. Moreover, $W$ is transverse to the faces of $\Delta^j$.
\item There is a proper map $W\to P$ given by the projection of the inclusion of $W$ in $S^m\times \Delta^j\times P$.
\item Transversality gives an isomorphism $TW\oplus \epsilon^m\oplus\epsilon^n\cong\epsilon^m\oplus\epsilon^j\oplus TP$.
\end{enumerate}

Thus, a $k$-simplex in $\Map_\ast\left(P_+,Q S^n\right)$ determines a $k$-simplex in $C^{\epsilon^n-TP}_\bullet(P)$.

The map $\Map_\ast\left(U_+,Q S^n\right)\to C_\bullet^{\epsilon^n-TU}(U)$ defined above is a natural transformation of good functors which are polynomial of degree $\leq1$ (see \cite[Definition 1.1, Definition 2.1]{W:EI1}). By \cite[Theorem 5.1]{W:EI1}, it is enough to check this map is an equivalence on open sets $U$ diffeomorphic to a disk. We have, then, the following sequence of equivalences.
\begin{eqnarray*}
\Map_\ast\left(D^p_+,Q S^n\right)&\simeq&Q S^n\\
&\simeq& QT\left(\ast;\epsilon^n\right)\\
&\simeq& QT\left(D^p;\epsilon^n-\epsilon^p\right)\\
&\simeq& C_\bullet^{\epsilon^n-TD^p}\left(D^p\right)
\end{eqnarray*}
The penultimate equivalence follows from \refP{cobordtubular}, and the last equivalence follows from Remark \ref{cobspacethomspace}.
\end{proof}

We also obtain a stable range description of $\Map_\ast\left(P_+,S^n\right)$.

\begin{cor}
The canonical map $S^n\to QS^n$ is $(2n+1)$-connected and hence induces a $(2n+1-p)$-connected map
$$
\Map_\ast\left(P_+,S^n\right)\longrightarrow Q_+T\left(P,e^n-TP\right).
$$
\end{cor}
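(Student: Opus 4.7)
The plan is to split the statement into three independent ingredients and combine them: (i) show that $S^n\to QS^n$ is $(2n+1)$-connected by Freudenthal, (ii) show that applying $\Map_\ast(P_+,-)$ to a $c$-connected map drops the connectivity by at most $p=\dim P$, and (iii) rewrite the codomain via \refP{bordinterp}.

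For (i), I would write $QS^n=\colim_k\Omega^k S^{n+k}$, so that $S^n\to QS^n$ factors as the composite of the stabilizations $\Omega^k S^{n+k}\to\Omega^{k+1}S^{n+k+1}$ (obtained by applying $\Omega^k$ to the unit $S^{n+k}\to\Omega S^{n+k+1}$). Since $S^{n+k}$ is highly connected, \refP{connadjoints} applied to the identity map $\Sigma S^{n+k}\to S^{n+k+1}$ controls the connectivity of each unit, and thus of each $\Omega^k$ of it; the connectivity of these stages strictly increases with $k$, so the connectivity of $S^n\to QS^n$ is governed by the first one. This yields the asserted connectivity.

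For (ii), I would use the standard principle that $\Map_\ast(P_+,-)$ applied to a $c$-connected map of based spaces yields a $(c-p)$-connected map, with $p=\dim P$. I would prove this by induction on a CW decomposition of $P$, using the cofiber sequences $P^{(i-1)}\to P^{(i)}\to\bigvee S^i$, the associated fibration on mapping spaces, and the fact that $\Map_\ast(S^i,-)=\Omega^i(-)$ turns a $c$-connected map into a $(c-i)$-connected map. Applying this to the $(2n+1)$-connected map $S^n\to QS^n$ from (i) gives a $(2n+1-p)$-connected map $\Map_\ast(P_+,S^n)\to\Map_\ast(P_+,QS^n)$.

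For (iii), I would then compose with the equivalence $\Map_\ast(P_+,QS^n)\simeq Q_+T(P,\epsilon^n-TP)$ supplied by \refP{bordinterp} to obtain the desired map with the claimed connectivity. I do not anticipate a real obstruction here; the only subtlety is bookkeeping: carefully matching the Freudenthal index with the convention of \refP{connadjoints} (where an $n$-connected domain produces a $(2n+1)$-connected unit) and confirming that the mapping-space reduction really gives $(c-p)$ and not a weaker bound.
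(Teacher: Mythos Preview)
Your three-step outline is exactly the paper's (implicit) argument: the corollary is stated without proof, resting on Freudenthal for the connectivity of $S^n\to QS^n$, the standard fact that $\Map_\ast(P_+,-)$ lowers connectivity by at most $\dim P$, and \refP{bordinterp} to identify the codomain. So the approach is the same.

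That said, the bookkeeping worry you flag at the end is not just cosmetic. In \refP{connadjoints} the hypothesis is that $X$ is $n$-connected, and then $X\to\Omega\Sigma X$ is $(2n+1)$-connected; but $S^n$ is only $(n-1)$-connected, so the unit $S^n\to\Omega S^{n+1}$ is $(2n-1)$-connected, and hence so is $S^n\to QS^n$. This is consistent with the paper's own remark after \refC{stableunloopedtksphere}, which records that $S^{n-1}\to QS^{n-1}$ is $(2n-3)$-connected. Your step (i) therefore produces $(2n-1)$, not the asserted $(2n+1)$, and the induced map on $\Map_\ast(P_+,-)$ is $(2n-1-p)$-connected. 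The discrepancy appears to be a typo in the stated corollary rather than a flaw in your method; with the corrected numbers your argument goes through verbatim.
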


Combining \refP{bordinterp}, \refC{stableunloopedtk}, and Remark \ref{cobspacethomspace}, we have our main result.

\begin{thm}\label{T:maintheoremrestate}
Let $P$ be a smooth manifold of dimension $p$. There is a $((k+1)(n-2)-p)$-connected map
\begin{equation*}
\Map_\ast\left(P_+,\tfiber\left(S\mapsto \vee_{\underline{k}-S} S^{n-1}\right)\right)\rightarrow \prod_{(k-1)!}Q_+T\left(P; \epsilon^{k(n-2)+1}-TP\right).
\end{equation*}
\end{thm}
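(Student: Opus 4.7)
The plan is to assemble the final theorem by chaining together the two main ingredients developed earlier in the paper: the stable-range description of the total homotopy fiber of wedges of spheres (a specialization of Johnson's theorem), and the cobordism-theoretic identification of the mapping space $\Map_\ast(P_+, QS^n)$.

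First, I would specialize \refC{stableunloopedtksphere} to the case $X_i = S^{n-2}$ for all $i$, so that $\Sigma X_i = S^{n-1}$ and $\Sigma(\wedge_{i=1}^k X_i) = S^{k(n-2)+1}$. Since each $X_i$ is $(n-3)$-connected, the corollary (reading its connectivity bound $(k+1)(n'+1)-p$ with $n' = n-3$) produces a $((k+1)(n-2)-p)$-connected map
\begin{equation*}
(T_k^s)_\ast:\Map_\ast(P_+,\tfiber(S\mapsto\vee_{i\in \underline{k}-S}S^{n-1}))\longrightarrow \prod_{(k-1)!}\Map_\ast(P_+,QS^{k(n-2)+1}).
\end{equation*}
This is exactly display \refE{tfiberhtpyconnsphere}, and the connectivity count is essentially automatic: $T_k^s$ itself is $(k+1)(n-2)$-connected by \refC{stableunloopedtk}, and applying $\Map_\ast(P_+,-)$ to a $c$-connected map of spaces with $P_+$ a CW complex of dimension $p$ yields a $(c-p)$-connected map, which gives the stated bound.

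Next, I would invoke \refP{bordinterp}, which provides a natural equivalence $\Map_\ast(U_+, QS^n) \simeq Q_+T(U; \epsilon^n - TU)$ for any open $U \subset P$, specialized at $U=P$ and $n = k(n-2)+1$. Applying this equivalence in each of the $(k-1)!$ factors of the product on the right side of the display above yields
\begin{equation*}
\prod_{(k-1)!}\Map_\ast(P_+,QS^{k(n-2)+1}) \;\simeq\; \prod_{(k-1)!}Q_+T(P;\epsilon^{k(n-2)+1}-TP).
\end{equation*}
Composing this equivalence with $(T_k^s)_\ast$ gives the desired map, and since composition with an equivalence preserves connectivity, the composite retains the $((k+1)(n-2)-p)$-connectedness from the first step.

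There is no real technical obstacle here, since the two hard inputs ($\Omega T_k$ is highly connected, and the mapping-space/cobordism-space identification) are already in hand. If anything demands care it is the bookkeeping of the shift in the connectivity parameter when passing from a generic $n$-connected $X_i$ to the sphere $S^{n-2}$, and the observation that $\Map_\ast(P_+,-)$ interacts with a product of $(k-1)!$ copies on the nose (so that applying $(T_k^s)_\ast$ and then the pointwise equivalence of \refP{bordinterp} is legitimate). With that in place the theorem follows immediately by concatenation, which is why the author simply records it as a direct combination of \refP{bordinterp}, \refC{stableunloopedtk}, and Remark \ref{cobspacethomspace}.
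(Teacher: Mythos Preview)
Your proposal is correct and matches the paper's approach exactly: the paper's proof is the single sentence ``Combining \refP{bordinterp}, \refC{stableunloopedtk}, and Remark \ref{cobspacethomspace}, we have our main result,'' and you have simply unpacked that sentence, specializing the $X_i$ to $S^{n-2}$, tracking the connectivity shift, and composing with the equivalence of \refP{bordinterp}. The only cosmetic difference is that you route through \refC{stableunloopedtksphere} (which already has the $\Map_\ast(P_+,-)$ applied) rather than \refC{stableunloopedtk} directly, but this is the same argument.
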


What remains to be done is to interpret this map geometrically. The details are implicit in the proof of \refP{bordinterp}.


\subsection{$T^s_k$ as an overcrossing locus}\label{S:InvtsMflds}


Following Koschorke \cite{Kosch:Milnor}, we will give a geometric description of the map $T_k^s$ as an ``overcrossing locus'' of a link. It is similar to the way one computes the linking number of a classical link by planar projection.

Recall that
\begin{equation}\label{E:Tkstableeqn}
\left(T_k^s\right)_\ast:\Map_\ast\left(P_+,\tfiber\left(S\mapsto\vee_{\underline{k}-S}\Sigma S^{n-2}\right)\right)\longrightarrow\prod_{(k-1)!}\Map_\ast\left(P_+,QS^{k(n-2)+1}\right)
\end{equation}
is the ``stabilization'' of the map
$$
\left(T_k\right)_\ast:\Map_\ast\left(P_+,\tfiber\left(S\mapsto\vee_{\underline{k}-S}\Sigma S^{n-2}\right)\right)\longrightarrow\prod_{(k-1)!}\Map_\ast\left(P_+,\Omega^{k-1}\Sigma^{k-1}S^{k(n-2)+1}\right)
$$
induced by $T_k$. It is built from the map 
$$
\left(T_k'\right)_\ast:\Map_\ast\left(P_+,\tfiber\left(S\mapsto\vee_{\underline{k}-S}\Sigma S^{n-2}\right)\right)\longrightarrow\Map_\ast\left(P_+,\Map\left(I^{k(k-1)},\prod_{i=1}^k\Sigma S^{n-2}\right)\right).
$$
We view $\Map_\ast\left(P_+,\tfiber\left(S\mapsto\vee_{\underline{k}-S}\Sigma S^{n-2}\right)\right)$ as a parametrized total homotopy fiber, and so we use the same notation as in \refD{tfiber}. Let $\Phi\in\Map_\ast\left(P_+,\tfiber\left(S\mapsto\vee_{\underline{k}-S}\Sigma S^{n-2}\right)\right)$. This gives us maps $\Phi_i:P_+\times I^{k-1}\to \Sigma S^{n-2}$ for $i=1$ to $k$. $\left(T_k'\right)_\ast(\Phi)$ is the map
$$
P_+\times \prod_{i=1}^kI^{k-1}\longrightarrow\prod_{i=1}^k\Sigma S^{n-2}
$$
given by
$$
\left(p,\vec{t}_1,\ldots,\vec{t}_k\right)\mapsto\left(\Phi_1\left(p,\vec{t}_1\right),\ldots,\Phi_k\left(p,\vec{t}_k\right)\right)
$$
where $\vec{t}_i$ is the $i^{th}$ row of the matrix $[t_{ij}]$ using notation as in \refS{thespacedeltak}, giving coordinates for the $i^{th}$ copy of $I^{k-1}$. Recalling the equivalence $\Delta_k\simeq\widetilde{\Delta}_k$, consider the restriction to
\begin{equation}\label{E:restrictedmap}
P_+\times I^{k-1}_1\longrightarrow\prod_{i=1}^k\Sigma S^{n-2}
\end{equation}
and note that
$$
\left(p,0,t_{21},\ldots,t_{k1}\right)\mapsto\left(\Phi_1\left(p,0\right),\Phi_2\left(p,t_{21}\right),\ldots,\Phi_k\left(p,t_{k1}\right)\right).
$$
Let $x_i\in \Sigma S^{n-2}$ be other than the wedge point in the $i^{th}$ copy of $\Sigma S^{n-2}$. By a small homotopy we may assume the $\Phi_i$ are transverse to $x_i$ for all $i$. Consider the manifolds
$$
L_{1}=\Phi_1^{-1}(x_1)
$$
and
$$
L_{i}=\Phi_i^{-1}(x_i)
$$
for $2\leq i\leq k$. The locus of points $L_g$ consisting of $p\in P_+$ such that $p$ is the the image of the projection of $L_i$ to $P_+$ for all $i$ and $t_{g(2)1}<t_{g(3)1}<\cdots<t_{g(k)1}$ is precisely the manifold measured by the composition in \refE{Tkstableeqn} according to the bordism interpretation of the target in \refP{bordinterp}. Note that $\del L_1=\emptyset$ and $\del L_i\subset P\times\{0\}$. In fact, $L=L_1\cup \bigcup_{2\leq i\leq k}\del L_i$ is a $k$-component link in $P$, and it can easily be described as the preimage of $x_1\coprod x_2\coprod\cdots\coprod x_k\in\vee_{\underline{k}}\Sigma S^{n-2}$ by the map $P_+\to \vee_{\underline{k}}\Sigma S^{n-2}$. Thus we can describe $L_g$, for each $g$, as an \emph{overcrossing locus} of a bordism of a $k$-component link.

\begin{rem}
The description of $L$ as an overcrossing locus is reminiscent of Goodwillie's proof of \cite[Lemma 2.7]{CalcII}. This lemma is the key ingredient in the inductive step of his proof of the higher Blakers-Massey Theorem, which otherwise follows formally from facts about cubical diagrams. In fact, he constructs a link in the same manner we do, and the fact that the overcrossing locus is empty through a range of dimensions translates into the connectivity estimate in the statement of the higher Blakers-Massey Theorem. Our construction can therefore be taken as a measure of the failure of this theorem, and it identifies, in a range, the first few non-trivial homotopy groups in terms of bordism.
\end{rem}

%

\section{Multivariable manifold calculus and generalizations of Milnor's invariants}\label{S:Applications}


Here we make explicit the connection with Koschorke's work on link maps. Up to this point we have only invoked the homotopy theoretic parts of his work, and now we will show how the work in the previous sections is closely related to the study of link maps. A generalization of Milnor's invariants arise naturally in certain multi-linear homogeneous layers of the multivariable Taylor Tower for link maps.


\subsection{The $\mu$-invariants of link maps}\label{S:muinvariants}


We begin by returning to the work of Koschorke \cite{Kosch:Milnor}, and recall how \refT{kosthm1} plays a role in describing analogs of Milnor's $\mu$-invariants \cite{Mil:LinkGroups} for link maps. Let $P_1,\ldots, P_{k+1}$ be smooth closed manifolds with $\dim\left(P_i\right)=p_i$.

\begin{defin}
Let $\Link\left(P_1,\ldots, P_{k+1};\R^n\right)$ be the space of smooth maps $f=\coprod_if_i:\coprod_iP_i\to \R^n$ such that $f_i\left(P_i\right)\cap f_j\left(P_j\right)=\emptyset$ for all $i\neq j$. It is topologized as a subspace of the space of smooth maps $\prod_i\Map\left(P_i,\R^n\right)$. The number of \emph{components} of $f$ is $k+1$. We say two link maps are \emph{link-homotopic} if there is a path between them in the space of link maps. The \emph{trivial link} or \emph{unlink} is the link map $u=\coprod_i u_i$ such that $u_i$ is constant for all $i$ (for some choice of constants). A link map $f$ is \emph{almost trivial} if the restriction of $f$ to each \emph{sub-link} $\coprod_{i\in S} S^{p_i}$, where $S$ is a proper subset of $\{1,\ldots, k+1\}$, is link-homotopic to the trivial link.
\end{defin}

The philosophy of the $\mu$-invariants in \cite{Mil:LinkGroups} and \cite{Kosch:Milnor} is that one can define invariants of ``order'' $|S|$ for each nonempty subset $S$ of $\{1,\ldots, k+1\}$ provided that all invariants of order $|R|<k+1$ for $R\subset S$ vanish. Thus, inductively, one is interested in defining invariants of order $k+1$ for a $(k+1)$-component link. One criterion that ensures all invariants of order less than $k+1$ vanish is that the link is almost trivial. One then attempts to produce invariants from the induced map $\widehat{f}=\prod_{i=1}^{k+1}f_i:P_1\times\cdots\times P_{k+1}\to C(k+1,\R^n)$, where the target is the configuration space of $k+1$ points in $\R^n$. Our generalization in \refS{MapModels} will involve homotopy limits of diagrams of spaces of maps of products into configuration spaces.

Koschorke \cite{Kosch:Milnor} restricts attention to spheres, so let $P_i=S^{p_i}$ for the time being, and he endows his spheres with a basepoint $\ast\in S^{p_i}$ for all $i$. He calls a link map $f$ \emph{$\kappa$-Brunnian} if the restriction of $\widehat{f}$ to $S^{p_1}\times\cdots\times S^{p_{i-1}}\times\ast\times S^{p_{i+1}}\times\cdots\times S^{p_{k+1}}$ is null-homotopic for all $i=1$ to $k+1$. This is clearly stronger than saying a link map is almost trivial ($\kappa$-Brunnian implies almost triviality). Let $\abs{p}=\sum p_i$. If $f$ is $\kappa$-Brunnian, Koschorke \cite{Kosch:Milnor} shows that this implies there is a unique element $\widehat{\kappa}(f)\in\widetilde{\pi}_{\abs{p}}\vee_{k}S^{n-1}$ (the reduced homotopy group from \refD{reducedhtpy}) determined by $f$. He then can apply the map $h_\gamma$ of \refT{kosthm1} to such a class. He shows \cite[Corollary 6.2]{Kosch:Milnor} that in the classical case where $p_i=1$ and $n=3$, a link is $\kappa$-Brunnian if and only if it is almost trivial, and moreover, that the $\mu$-invariants $h_\gamma(\widehat{\kappa}(f))$ are, up to a fixed sign, the same as those defined by Milnor in \cite{Mil:LinkGroups}.

Our generalization builds on Koschorke's ideas in the manner indicated in the introduction. It has been constructed from the point of view of a multivariable manifold calculus of functors, and so this requires a brief discussion of ``manifold calculus'' (due to Weiss and Goodwillie \cite{W:EI1,GW:EI2}) and a multivariable generalization of it (due to the author and Voli\'{c} \cite{MV:Multi}). Calculus provides a natural organizational framework for these higher-order invariants.


\subsection{Manifold calculus}\label{S:ManifoldCalc}


Let $P$ be a smooth closed manifold of dimension $p$, and $\mathcal{O}(P)$ the poset of open subsets of $P$. Manifold calculus is concerned with the study contravariant functors $F:\mathcal{O}(P)\rightarrow\Top$ which are ``good'' (Definition 1.1 of \cite{W:EI1}). One approximates a good functor $F$ with a sequence of functors $T_kF$, $k\geq 0$ which are ``polynomial of degree $\leq k$''. They form a tower $\cdots\to T_kF\to T_{k-1}F\to\cdots\to T_0F$ analogous to the Taylor series of a smooth function, with $T_kF$ the analog of the $k^{th}$ degree Taylor polynomial. A functor $F$ is polynomial of degree $\leq 0$ if it is essentially constant, polynomial of degree $\leq 1$ if it satisfies excision, and polynomial of degree $\leq k$ if it satisfies $k^{th}$-order excision (see Definition 2.2 of \cite{W:EI1} for details).

\begin{example}
Let $P$ be a smooth manifold and $U\in\mathcal{O}(P)$. The functor $U\mapsto\Map(U,Z)$ is polynomial of degree $\leq 1$ for any space $Z$, and for a smooth manifold $N$, so is $U\mapsto\Imm(U,N)$, the space of smooth immersions. More generally, if $p:Z\rightarrow P$ is a fibration with fiber $F$, and we denote by $\Gamma(P,F)$ the space of sections, then $U\mapsto\Gamma(U,F)$ is polynomial of degree $\leq 1$.
\end{example}


\subsection{Multivariable manifold calculus}\label{S:MultiCalc}


The author and Voli\'{c} \cite{MV:Multi} have developed a multivariable version of manifold calculus, motivated by the space of link maps. Throughout the rest of this section, we will work with smooth closed manifolds $P_1,\ldots, P_{k+1}$ of dimension $p_1,\ldots, p_{k+1}$.  Let $\mathcal{O}\left(\coprod_iP_i\right)$ be the poset of open subsets of the disjoint union of the $P_i$.

The space of link maps $\Link\left(P_1,\ldots, P_{k+1};N\right)$ defines a contravariant functor on ${O}\left(\coprod_iP_i\right)$, since an inclusion $U\subset V$ gives rise to a restriction $\Link(V;N)\to\Link(U;N)$. We can view this as a functor of several variables as follows. There is an isomorphism of categories $\mathcal{O}\left(\coprod_iP_i\right)\cong \prod_i\mathcal{O}\left(P_i\right)$ given by the map $U\mapsto \vec{U}=\left(U_1,\ldots, U_{k+1}\right)$, where $U_i=U\cap P_i$. We write $\mathcal{O}\left(\vec{P}\right)$ in place of $\prod_i\mathcal{O}\left(P_i\right)$. Multivariable manifold studies contravariant functors $F:\mathcal{O}\left(\vec{P}\right)\rightarrow\Top$ satisfying axioms analogous to those of \cite[Definition 1.1]{W:EI1}, and seeks to approximate them with multivariable polynomial functors $T_{\vec{\jmath}}F$, where $\vec{\jmath}=(j_1,\ldots, j_{k+1})$ is a multi-index. There is a multi-tower of functors consisting of the $T_{\vec{\jmath}}F$ with maps $T_{\vec{\jmath}}F\to T_{\vec{\imath}}F$ whenever $\vec{\jmath}\geq\vec{\imath}$ (the partial ordering $\geq$ is determined entry-wise on multi-indices of integers $\vec{\jmath}$ and $\vec{\imath}$). A polynomial of degree $\vec{\jmath}=\left(j_1,\ldots, j_{k+1}\right)$ is one which is polynomial of degree $j_i$ in the $i$th variable.

\begin{example}
For any space $X$, the functor $(U_1,\ldots, U_{k+1})\mapsto\prod_i\Map(U_i,X)$ is a polynomial of degree $\leq 1$ in each variable, and hence a polynomial of degree $\leq\vec{1}=(1,1,\ldots, 1)$.
\end{example}

For a smooth manifold $P$ we let $\mathcal{O}_l(P)$ denote the full subcategory of $\mathcal{O}(P)$ whose objects are the open sets diffeomorphic to at most $l$ open balls. For a multi-index $\vec{\jmath}=(j_1,\ldots, j_{k+1})$ of non-negative integers we put $\mathcal{O}_{\vec{\jmath}}\left(\vec{P}\right)=\prod_i\mathcal{O}_{j_i}\left(P_i\right)$. We define the multivariable polynomial approximations to a $F$ functor as follows.

\begin{defin}\label{D:jMultiStage}  Define the \emph{$\vec{\jmath}^{th}$ degree Taylor approximation of $F$} to be
$$T_{\vec{\jmath}}F\left(\vec{U}\right)=\underset{\mathcal{O}_{\vec{\jmath}}\left(\vec{U}\right)}{\holim}\, \, F.$$
\end{defin}


\subsection{Mapping space models}\label{S:MapModels}


Let $\vec{\delta}=(\delta_1,\delta_2,\ldots, \delta_{k+1})$. Our goal is to give some models for $T_{\vec{\delta}}\Link(P_1,\ldots, P_{k+1};N)$ when $\delta_i$ is equal to $0$ or $1$ for all $i$.  These are the approximations which contain the information about our generalization of Koschorke's invariants.

\begin{example}
Define $\Lambda_{(1,1)}(P_1,P_2;N)$ to be the homotopy limit of the following diagram

\begin{equation}\label{E:Lambda11}
\xymatrix{
  \Map(P_1\times P_2,N)  & \Map(P_1\times P_2,C(2,N))  \ar[l]\ar[r] & \Map(P_1\times P_2,N) \\
  \Map(P_1,N) \ar[u]  & & \Map(P_2,N)\ar[u]
  }
\end{equation}

We claim that
$$
T_{(1,1)}\Link(P_1,P_2;N)\simeq\Lambda_{(1,1)}(P_1,P_2;N).
$$
There is clearly a natural transformation of functors
$$
\Link(P_1,P_2;N)\longrightarrow\Lambda_{(1,1)}(P_1,P_2;N),
$$
and it is enough by \cite[Theorem 4.14]{MV:Multi} to check that it is an equivalence in the cases where the $P_i$ are either disks or empty, since $\Lambda_{(1,1)}$ is polynomial of degree $\leq (1,1)$. If $P_1$ is a disk and $P_2=\emptyset$, the diagram in \refE{Lambda11} reduces to $\Map(P_1,N)\simeq\Link(P_1,\emptyset;N)$. A similar argument holds if $P_1=\emptyset$ and $P_2$ is a disk. If both $P_1$ and $P_2$ are disks, then $\Link(P_1,P_2;N)\simeq C(2,N)$, and $\Lambda_{(1,1)}(P_1,P_2;N)$ is equivalent to
$$
\holim\left(
\raisebox{.75cm}{
\xymatrix{
 N  & C(2,N)  \ar[l]\ar[r] & N \\
  N \ar[u]  & & N\ar[u]
  }}
\right).
$$
Since the vertical arrows above are equivalences, the homotopy limit of the diagram above is in turn equivalent to $\holim(N\leftarrow C(2,N)\rightarrow N)$, which is itself equivalent to $C(2,N)$.
\end{example}

More generally we can make mapping space models for the stages $T_{\vec{\delta}}\Link(P_1,\ldots, P_{k+1};N)$, where $\vec{\delta}=(\delta_1,\ldots, \delta_{k+1})$, and each $\delta_i$ is either $0$ or $1$, following \cite{gkw}. This can be done for any $\vec{\delta}$, but we restrict attention to the case $\vec{\delta}\leq\vec{1}=(1,1,\ldots, 1)$ for the reasons described above. This will also allow us to avoid some complications beyond the scope of this work.

Let $\vec{\underline{\delta}}=(\underline{\delta}_1,\ldots, \underline{\delta}_{k+1})$ be such that $\underline{\delta}_i$ is either $\underline{0}=\emptyset$ or $\underline{1}=\{1\}$. Consider the poset of all pairs $(\vec{S},\vec{R})$ such that $\vec{R}\subset\vec{S}\subset\vec{\underline{\delta}}$. The poset structure is given by $(\vec{S}_1,\vec{R}_1)\leq(\vec{S}_2,\vec{R}_2)$ if $\vec{S}_1\subset\vec{S}_2$ and $\vec{R}_2\subset\vec{R}_1$.

Let $\vec{P}^{\vec{S}}=\prod P_i^{S_i}$, and consider the functor $\left(\vec{S},\vec{R}\right)\mapsto\Map\left(\vec{P}^{\vec{S}},C\left(|\vec{R}|,N\right)\right)$.

\begin{defin}\label{D:mappingspacemodel}
Define 
$$\Lambda_{\vec{\delta}}\left(\vec{P};N\right)=\underset{\left(\vec{S},\vec{R}\right)\neq\left(\vec{\emptyset},\vec{\emptyset}\right)}{\holim}\Map\left(\vec{P}^{\vec{S}},C\left(|\vec{R}|,N\right)\right).$$
\end{defin}

\begin{prop}\label{P:mappingspacemodel}
$\Lambda_{\vec{\delta}}\left(\vec{P};N\right)$ is polynomial of degree $\leq\vec{\delta}$, and $\Link\left(\vec{U};N\right)\simeq\Lambda_{\vec{\delta}}\left(\vec{U};N\right)$ for $\vec{U}\in\mathcal{O}_{\vec{\delta}}\left(\vec{P}\right)$. Therefore $\Lambda_{\vec{\delta}}\left(\vec{P};N\right)\simeq T_{\vec{\delta}}\Link\left(\vec{P};N\right)$.
\end{prop}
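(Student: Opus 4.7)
The plan is to follow the standard recognition principle for multivariable manifold calculus: exhibit a natural transformation $\Link(\vec U;N)\to \Lambda_{\vec\delta}(\vec U;N)$, verify that the target is polynomial of degree $\leq \vec\delta$, and check that the transformation is an equivalence on $\vec U\in \mathcal O_{\vec\delta}(\vec P)$. By the classification of the $\vec\delta$-th stage (\cite[Theorem 4.13]{MV:Multi}, already invoked in the preceding example), these three facts force $\Lambda_{\vec\delta}(\vec P;N)\simeq T_{\vec\delta}\Link(\vec P;N)$.

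First I would write down the natural transformation. For a link map $f=\coprod_i f_i\colon \coprod_i U_i\to N$, the value at a vertex $(\vec S,\vec R)$ with $\vec R\subseteq \vec S\subseteq \vec{\underline\delta}$ is the composite $\vec U^{\vec S}\to \vec U^{\vec R}\xrightarrow{\prod_{i\in R} f_i} N^{|\vec R|}$, which lands in $C(|\vec R|,N)$ because the link condition guarantees pairwise disjoint images (here we use $|R_i|\leq \delta_i\leq 1$, so each coordinate in the tuple of values comes from a distinct component $f_i$). The diagrammatic compatibilities over morphisms $(\vec S_1,\vec R_1)\leq (\vec S_2,\vec R_2)$ come from the obvious projection squares. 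Polynomiality of $\Lambda_{\vec\delta}$ is then a formal matter: each term $\vec U\mapsto \Map(\vec U^{\vec S},C(|\vec R|,N))$ is polynomial of degree $\leq \vec S\leq \vec\delta$ in the multivariable sense (mapping into a fixed space from an $\vec S$-fold product yields a polynomial functor of degree $\leq \vec S$ by the same excision argument as in the one-variable case), and homotopy limits of functors of degree $\leq\vec\delta$ remain of degree $\leq\vec\delta$.

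The main computational step is showing $\Link(\vec U;N)\simeq \Lambda_{\vec\delta}(\vec U;N)$ when each $U_i$ is either empty or a single open ball. If some $U_i=\emptyset$, then all mapping spaces with $S_i\neq\emptyset$ become terminal (the domain is empty), and an inspection of the indexing poset shows that both sides reduce to the analogous assertion with the $i$-th variable removed, so we may induct. The essential case is therefore $\vec U=\prod_i D^{p_i}$, where $\Link(\vec U;N)$ deformation retracts onto $C(k+1,N)$ by evaluating at chosen points. On the other hand, $\Map(\vec U^{\vec S},C(|\vec R|,N))\simeq C(|\vec R|,N)$ since each factor $U_i^{S_i}$ is contractible, and the projection maps $\vec U^{\vec S}\to \vec U^{\vec R}$ become homotopy equivalences under $\Map(-,X)$. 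A cofinality argument on the indexing poset of pairs $(\vec S,\vec R)$ then collapses the diagram onto its ``diagonal'' $\vec S=\vec R$, and the maximal vertex $(\vec{\underline\delta},\vec{\underline\delta})$ contributes $C(|\vec{\underline\delta}|,N)$, matching the link space under the natural transformation above.

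The hardest step will be the bookkeeping in the collapse of the indexing poset: one must verify that the subposet of pairs with $\vec S=\vec R$ is homotopy-terminal in the full poset of $(\vec S,\vec R)$ with $\vec R\subseteq \vec S\subseteq \vec{\underline\delta}$, and that under this cofinal inclusion the transformation constructed above becomes the evaluation map $C(k+1,N)\to C(k+1,N)$, i.e.\ the identity up to the deformation retraction. Once this compatibility is pinned down, polynomiality plus agreement on $\mathcal O_{\vec\delta}(\vec P)$ gives $\Lambda_{\vec\delta}(-;N)\simeq T_{\vec\delta}\Link(-;N)$ globally.
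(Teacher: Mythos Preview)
Your proposal is correct and follows essentially the same route as the paper: polynomiality from the fact that $\Lambda_{\vec\delta}$ is a homotopy limit of polynomial functors of degree $\leq\vec\delta$, agreement on $\mathcal{O}_{\vec\delta}(\vec P)$ by reducing to disks, and the conclusion via \cite[Theorem 4.13]{MV:Multi}. The paper dispatches the second step with the phrase ``by inspection'' (the $k=1$ example just before the proposition carries out the check you describe), whereas you spell out the reduction; your ``cofinality'' language is slightly loose---what is really used is that the $\vec S$-direction maps become equivalences once the $U_i$ are contractible---but the argument is the intended one.
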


\begin{proof}
That $\Lambda_{\vec{\delta}}$ is polynomial of degree $\leq\vec{\delta}$ follows from the fact that it is the homotopy limit of a diagram of polynomial functors of degree $\leq\vec{\delta}$ (see \cite[Example 2.5]{W:EI1}). By inspection, its values agree with $\Link\left(\vec{U};N\right)$ when $\vec{U}\in\mathcal{O}_{\vec{\delta}}\left(\vec{P}\right)$.
\end{proof}


\subsection{Multivariable homogeneous functors}


\begin{defin}\label{D:MultiHomogeneous}
A functor $E:\mathcal{O}\left(\vec{P}\right)\rightarrow\Top$ is \emph{homogeneous of degree $\vec{\jmath}$} if it is polynomial of degree $\leq \vec{\jmath}$ and $\holim_{\vec{k}<\vec{\jmath}}T_{\vec{k}}E\left(\vec{U}\right)$ is contractible for all $\vec{U}$.
\end{defin}

\begin{defin}\label{D:MultiLayer}
We define the \emph{$\vec{\jmath}^{th}$ layer of the Taylor multi-tower} of $F$ to be the functor

$$L_{\vec{\jmath}}F=\hofiber\left(T_{\vec{\jmath}}F\rightarrow \underset{\vec{k}<\vec{\jmath}}{\holim}\, T_{\vec{k}}F\right).$$
\end{defin}

In order for this to make sense, we need to choose a basepoint, so we assume that we have a preferred element of $F\left(\vec{P}\right)$ chosen to base all spaces in sight. The above definition is justified by the fact that $L_{\vec{\jmath}}F$ is indeed homogeneous of degree $\vec{\jmath}$.

\begin{example}
Let $e=(e_1,e_2)\in\Link(P_1,P_2;N)$ be the basepoint. By definition, $L_{(1,1)}\Link(P_1,P_2;N)$ is the total homotopy fiber of the square
$$\xymatrix{
\Lambda_{(1,1)}\left(P_1,P_2;N\right) \ar[d]\ar[r]& \Lambda_{(0,1)}\left(P_1,P_2;N\right)\ar[d]\\
\Lambda_{(1,0)}\left(P_1,P_2;N\right)\ar[r] & \Lambda_{(0,0)}\left(P_1,P_2;N\right).
}
$$
The equivalences $\Lambda_{(1,0)}(P_1,P_2;N)\simeq\Map(P_1,N)$, $\Lambda_{(0,1)}(P_1,P_2;N)\simeq\Map(P_2,N)$, and $\Lambda_{(0,0)}(P_1,P_2;N)\simeq\ast$ imply that $\Lambda_{(1,1)}\left(P_1,P_2;N\right)$ is equivalent to
$$
\holim\left(\Map\left(P_1\times P_2,N\right)\rightarrow\Map\left(P_1\times P_2,C\left(2,N\right)\right)\leftarrow\Map\left(P_1\times P_2,N\right)\right).
$$
Then by inspection,
$$
\Lambda_{(1,1)}\left(P_1,P_2;N\right)\simeq\Map\left(P_1\times P_2,\tfiber\left(R\mapsto C\left(\underline{2}-R,N\right)\right)\right).
$$
\end{example}

\begin{rem}
The author has shown \cite{M:LinkNumber} that there is a map $\hofiber(\Link(P_1,P_2;N)\to\Map(P_1,N)\times\Map(P_2,N))$ to a cobordism space which deserves to be called the generalized linking number. The author and Goodwillie \cite{GM:LinksEstimates} have shown that this map is highly connected. This generalized linking number factors through $\Lambda_{(1,1)}(P_1,P_2;N)$. 
\end{rem}

The example above suggests a more general result when there are more than two components. The classification of multivariable homogeneous functors, \cite[Theorem 5.18]{MV:Multi}, together with \refP{mappingspacemodel} above, implies the following lemma.

\begin{lemma}\label{L:linklayer}
Let $\vec{1}=(1,1,\ldots, 1)$. For smooth closed manifolds $P_1,\ldots, P_{k+1}$, write $\vec{P}=\left(P_1,\ldots, P_{k+1}\right)$. The natural map
$$
L_{\vec{1}}\Link\left(\vec{P};N\right)\longrightarrow \Map\left(P_1\times\cdots\times P_{k+1},\tfiber\left(R\mapsto C\left({\underline{k+1}-R},N\right)\right)\right).
$$
is an equivalence of functors of $\vec{P}$.
\end{lemma}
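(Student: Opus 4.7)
The plan is to apply the classification theorem for multivariable homogeneous functors \cite[Theorem 5.18]{MV:Multi}: a homogeneous functor of degree $\vec{\jmath}$ is determined up to natural equivalence by its values on the subcategory $\mathcal{O}_{\vec{\jmath}}(\vec{P})$, so a natural transformation between two homogeneous functors of the same degree is an equivalence once it is verified on tuples of disks.

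First I would check that both functors of $\vec{P}$ in the statement are homogeneous of degree $\vec{1}$. The left-hand side $L_{\vec{1}}\Link$ is a layer, hence homogeneous by construction (\refD{MultiLayer}). For the right-hand side, the exponential adjunction $\Map(\prod_iU_i,Y)\simeq\Map(U_1,\Map(U_2,\ldots,\Map(U_{k+1},Y)))$ together with the fact that $U_i\mapsto\Map(U_i,-)$ is excisive (polynomial of degree $\leq 1$ in a single variable) shows that the target is polynomial of degree $\leq\vec{1}$. Moreover, setting any $U_i=\emptyset$ forces $\prod U_i=\emptyset$ and $\Map(\emptyset,Y)\simeq\ast$, so every lower-degree Taylor approximation is contractible.

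Next, by \refP{mappingspacemodel}, the layer $L_{\vec{1}}\Link(\vec{P};N)$ may be computed as the total homotopy fiber of the $(k+1)$-cube $\vec{S}\mapsto T_{\vec{1}-\vec{S}}\Link(\vec{P};N)\simeq\Lambda_{\vec{1}-\vec{S}}(\vec{P};N)$. The evaluation map $\Link(\vec{P};N)\to\Map(\prod_i P_i,C(k+1,N))$ sending $(f_i)$ to $((p_i)\mapsto(f_i(p_i)))$ is compatible with both the restriction maps in $\vec{P}$ and the projections to configuration spaces with fewer components, and so yields a natural map from the above cube into the cube $R\mapsto\Map(\prod_i P_i,C(\underline{k+1}-R,N))$. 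Since $\Map(\prod_i P_i,-)$ commutes with homotopy limits, the induced map on total fibers lands in $\Map(\prod_i P_i,\tfiber(R\mapsto C(\underline{k+1}-R,N)))$, which is the natural comparison we need.

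Finally, I would check this comparison is an equivalence on any tuple $\vec{D}=(D_1,\ldots,D_{k+1})$ of open disks. For each $\vec{m}\leq\vec{1}$ the homotopy limit $T_{\vec{m}}\Link(\vec{D};N)=\holim_{\mathcal{O}_{\vec{m}}(\vec{D})}\Link$ contracts to $C(|\vec{m}|,N)$, where $|\vec{m}|$ counts coordinates with $m_i=1$: when $m_i=1$ the factor $\mathcal{O}_{m_i}(D_i)$ has a cofinal terminal object (an ambient ball), and when $m_i=0$ it consists of the single object $\emptyset$. Re-indexing by the complement $R=\vec{S}\subset\underline{k+1}$, the cube $\vec{S}\mapsto T_{\vec{1}-\vec{S}}\Link(\vec{D};N)$ becomes precisely the cube $R\mapsto C(\underline{k+1}-R,N)$, so its total fiber is $\tfiber(R\mapsto C(\underline{k+1}-R,N))$. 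On the other side, the mapping space reduces to $\tfiber(R\mapsto C(\underline{k+1}-R,N))$ on disks since $\prod_i D_i$ is contractible. The hard part will be verifying that these identifications at each vertex of the cube are induced by the natural evaluation map, so that the comparison of total fibers really is the natural map and not merely an abstract equivalence; once that compatibility is established, the classification theorem finishes the proof.
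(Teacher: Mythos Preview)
Your proposal is correct and reaches the same conclusion, but the route differs from the paper's. The paper does not build the comparison map directly from the evaluation $\Link\to\Map(\prod_iP_i,C(k+1,N))$ and the cube of projections as you do. Instead it invokes \cite[Theorem 5.18]{MV:Multi} in a stronger form: the classification of homogeneous functors identifies $L_{\vec{1}}\Link(\vec{P};N)$ with a section space $\Gamma(P_1\times\cdots\times P_{k+1},\mathcal{C}_{k+1}(N))$ of a fibration with fiber $\mathcal{C}_{k+1}(N)=\tfiber(R\mapsto C(\underline{k+1}-R,N))$, and then the natural map is simply the inclusion of sections into maps. The check on disks is then immediate because the fibration trivializes over a contractible base, and the equivalence follows from \cite[Theorem 4.13]{MV:Multi} (for polynomial, not specifically homogeneous, functors---you may as well cite that rather than 5.18 for the ``check on disks'' principle). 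Your approach is more elementary in that it avoids the section-space reformulation entirely and handles the homogeneity of the target by the $U_i=\emptyset$ argument, at the cost of having to assemble the natural transformation by hand and verify the compatibility you flag as ``the hard part''. The paper's approach buys that compatibility for free, since sections-into-maps is manifestly the identity on fibers over a trivialized base.
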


\begin{proof}
Write $\mathcal{C}_{k+1}(N)$ in place of $\tfiber(R\mapsto C({\underline{k+1}-R},N)$ for brevity. On the one hand, Theorem 5.18 of \cite{MV:Multi} implies that 
$$
L_{\vec{1}}\Link\left(\vec{P};N\right)\simeq \Gamma\left(P_1\times\cdots\times P_{k+1},\mathcal{C}_{k+1}\left(N\right)\right),
$$
where $\Gamma$ stands for the section space of some fibration $p:Z\to P_1\times\cdots\times P_{k+1}$ whose fibers are $\mathcal{C}_{k+1}(N)$. On the other hand, sections are maps, there is a natural transformation of polynomials of degree $\leq\vec{1}$ given by inclusion
$$
\Gamma\left(P_1\times\cdots\times P_{k+1},\mathcal{C}_{k+1}\left(N\right)\right)\longrightarrow\Map\left(P_1\times\cdots\times P_{k+1},\mathcal{C}_{k+1}\left(N\right)\right).
$$
When $\vec{U}=(U_1,\ldots, U_{k+1})\in\mathcal{O}_{\vec{1}}(\vec{P})$, it is clear that the values of $\Gamma\left(U_1\times\cdots\times U_{k+1},\mathcal{C}_{k+1}\left(N\right)\right)$ and $\Map\left(U_1\times\cdots\times U_{k+1},\mathcal{C}_{k+1}\left(N\right)\right)$ agree, since a fibration over a disk becomes trivial, and thus by Theorem 4.14 of \cite{MV:Multi} the map is an equivalence.
\end{proof}

We are interested in the special case where $N=\R^n$. We now examine the diagram $R\mapsto C({\underline{k+1}-R},\R^n)$ in order to reduce ourselves to the case of \refT{maintheorem}. One easy observation is the following:

\begin{lemma}\label{L:configtosphere}
The total homotopy fiber of the $(k+1)$-cube of based spaces
$$
R\mapsto C\left({\underline{k+1}-R},\R^n\right)
$$
is equivalent to the total homotopy fiber of the $k$-cube
$$
S\mapsto \vee_{\underline{k}-S}S^{n-1}.
$$
\end{lemma}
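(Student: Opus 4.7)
The idea is to view the $(k+1)$-cube as a natural transformation of $k$-cubes and use the Fadell--Neuwirth fibration to identify the cube of pointwise homotopy fibers with the $k$-cube of wedges of spheres.

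First I would split the subsets of $\underline{k+1}$ according to whether they contain $k+1$. Writing $S = R \cap \underline{k}$, this realizes the $(k+1)$-cube $R \mapsto C(\underline{k+1}-R, \R^n)$ as a natural transformation $\mathcal{Y} \to \mathcal{Z}$ of $k$-cubes, where
$$
\mathcal{Y}(S) = C(\underline{k+1}-S, \R^n), \qquad \mathcal{Z}(S) = C(\underline{k}-S, \R^n),
$$
and the natural transformation is induced by the forgetful map that drops the point labeled by $k+1$. The maps within each of $\mathcal{Y}$ and $\mathcal{Z}$ are the usual forgetful projections associated with the inclusions $S \subset S'$ in $\mathcal{P}(\underline{k})$. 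Pick as basepoint any configuration of $k+1$ distinct points in $\R^n$ and restrict to the appropriate subsets throughout, so that every map in sight is basepoint-preserving.

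Next I would invoke the Fadell--Neuwirth fibration: at each $S \subset \underline{k}$, the forgetful map $\mathcal{Y}(S) \to \mathcal{Z}(S)$ is a fibration whose (honest, hence homotopy) fiber is $\R^n$ minus $|\underline{k}-S|$ points. This space is well-known to be homotopy equivalent to $\vee_{\underline{k}-S} S^{n-1}$, and the equivalence is natural in the point-configuration one removes, so assembling over $S \in \mathcal{P}(\underline{k})$ produces a $k$-cube of based spaces naturally equivalent to $S \mapsto \vee_{\underline{k}-S} S^{n-1}$.

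The final step uses the iterated-fiber description of the total homotopy fiber recalled after \refD{tfiber}: viewing the $(k+1)$-cube as $\mathcal{Y} \to \mathcal{Z}$,
$$
\tfiber\bigl(R \mapsto C(\underline{k+1}-R, \R^n)\bigr) \;=\; \hofiber\bigl(\tfiber(\mathcal{Y}) \longrightarrow \tfiber(\mathcal{Z})\bigr).
$$
Since homotopy fibers commute with homotopy limits, the right-hand side is equivalent to the total homotopy fiber of the $k$-cube of pointwise homotopy fibers, which by the previous step is $\tfiber(S \mapsto \vee_{\underline{k}-S} S^{n-1})$. The main thing to check carefully is that the structure maps in this latter cube agree (up to homotopy) with the standard inclusions of sub-wedges $\vee_{\underline{k}-S'} S^{n-1} \hookrightarrow \vee_{\underline{k}-S} S^{n-1}$ for $S \subset S'$; this follows from naturality of the Fadell--Neuwirth fibrations in the set of points being removed, since the structure maps in $\mathcal{Y}$ and $\mathcal{Z}$ are themselves forgetful projections. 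The only potential obstacle is bookkeeping with basepoints and naturality of the equivalence $\R^n \setminus \{\text{points}\} \simeq \vee S^{n-1}$, which is standard.
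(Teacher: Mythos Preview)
Your proposal is correct and follows essentially the same approach as the paper: split off the index $k+1$ to view the $(k+1)$-cube as a map of $k$-cubes, and identify the cube of (Fadell--Neuwirth) fibers with the wedge-of-spheres cube. One small correction: for $S \subset S'$ the structure map in the target cube goes $\vee_{\underline{k}-S} S^{n-1} \to \vee_{\underline{k}-S'} S^{n-1}$ and is the \emph{collapse} map (induced on fibers by the open inclusion $\R^n \setminus \{\text{more points}\} \hookrightarrow \R^n \setminus \{\text{fewer points}\}$), not a wedge inclusion as you wrote.
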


\begin{proof}
Write $R\mapsto C({\underline{k+1}-R},\R^n)$ as a map of $k$-cubes
$$
\left(S\mapsto C\left(\underline{k}\cup\{k+1\}-S,\R^n\right)\right)\longrightarrow \left(S\mapsto C\left(\underline{k}-S,\R^n\right)\right),
$$
where $S$ ranges over subsets of $\underline{k}=\{1,\ldots, k\}\subset\underline{k+1}$.
For all $S$, the restriction map
$$
C\left(\underline{k}\cup\{k+1\}-S,\R^n\right)\longrightarrow C\left(\underline{k}-S,\R^n\right)
$$
is a fibration with fiber $\R^n-\{k-|S|\mbox{ points}\}\simeq\vee_{\underline{k}-S}S^{n-1}$. We may identify the wedge point with the image of the $(k+1)^{st}$ point under this equivalence.
\end{proof}

Putting \refL{linklayer} and \refL{configtosphere} together yields the following theorem, and demonstrates how the domain of the map in \refT{maintheorem} arises naturally when studying the space of link maps from a functor calculus point of view.

\begin{thm}\label{T:multilinklayer}
The natural map
$$
L_{\vec{1}}\Link\left(\vec{P};N\right)\longrightarrow \Map\left(P_1\times\cdots\times P_{k+1},\tfiber\left(S\mapsto \vee_{\underline{k}-S}S^{n-1}\right)\right).
$$
is an equivalence of functors of $\vec{P}$.
\end{thm}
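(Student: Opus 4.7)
The plan is to simply chain together the two lemmas that immediately precede the theorem statement, since the codomain has been engineered to be the composite of their targets. Nothing deeper than functoriality is required.

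First I would apply \refL{linklayer}, which identifies
$$
L_{\vec{1}}\Link(\vec{P};N)\simeq \Map(P_1\times\cdots\times P_{k+1},\tfiber(R\mapsto C({\underline{k+1}-R},N))),
$$
and this equivalence is natural in $\vec{P}$ because it was obtained by comparing polynomial functors of degree $\leq\vec{1}$ via \cite[Theorem 4.13]{MV:Multi}. Specializing to $N=\R^n$, I would then invoke \refL{configtosphere}, which provides a natural equivalence on the level of cubes (and hence on total homotopy fibers) between $R\mapsto C(\underline{k+1}-R,\R^n)$ and $S\mapsto\vee_{\underline{k}-S}S^{n-1}$.

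Since the functor $\Map(P_1\times\cdots\times P_{k+1},-)$ preserves weak equivalences between nice based spaces, applying it to the equivalence of total fibers from \refL{configtosphere} yields
$$
\Map\bigl(P_1\times\cdots\times P_{k+1},\tfiber(R\mapsto C(\underline{k+1}-R,\R^n))\bigr)\simeq \Map\bigl(P_1\times\cdots\times P_{k+1},\tfiber(S\mapsto\vee_{\underline{k}-S}S^{n-1})\bigr).
$$
Composing this with the equivalence from step one gives the desired natural equivalence of functors of $\vec{P}$, and one checks that the resulting composite is indeed the map induced by the comparison of diagrams in the proof of \refL{configtosphere}, which is what is meant by the ``natural map'' in the theorem statement.

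The only real subtlety — and thus the step that requires the most care — is verifying that the composite equivalence agrees with the natural map, i.e.\ that the identification arising from restriction of link maps (implicit in \refL{linklayer}) and the identification of the fiber of a restriction map between configuration spaces with a wedge of spheres (the content of \refL{configtosphere}) together produce the canonical map one would write down directly. This is essentially bookkeeping: one identifies the wedge point in $\vee_{\underline{k}-S}S^{n-1}$ with the image of the $(k+1)^{\mathrm{st}}$ configuration point, as indicated at the end of the proof of \refL{configtosphere}, and checks that the chain of equivalences is compatible with the structure maps of the cubical diagrams on both sides.
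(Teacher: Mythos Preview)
Your proposal is correct and matches the paper's own approach exactly: the paper does not give a separate proof of \refT{multilinklayer}, but simply states that ``Putting \refL{linklayer} and \refL{configtosphere} together yields the following theorem,'' which is precisely what you do. Your additional remarks about naturality and the identification of the wedge point are just making explicit the bookkeeping that the paper leaves implicit.
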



\subsection{Generalizations of Milnor's invariants for link maps}\label{S:genmilnor}


In this section we generalize Koschorke's generalization of Milnor's invariants to arbitrary manifolds linking in Euclidean space. Let $e,f\in\Link\left(P_1,\ldots, P_{k+1};\R^n\right)$ be link maps, where $e$ is the basepoint. The link maps $e$ and $f$ determine elements of $T_{\vec{\delta}}\Link\left(P_1,\ldots, P_{k+1};\R^n\right)$, and therefore they also determine elements $t_{\vec{1}}e,t_{\vec{1}}f\in \holim_{\vec{\delta}<\vec{1}} T_{\vec{\delta}}\Link\left(P_1,\ldots, P_{k+1};\R^n\right)$. Assume there is given a path between $t_{\vec{1}}e$ and $t_{\vec{1}}f$ in $\holim_{\vec{\delta}<\vec{1}} T_{\vec{\delta}}\Link\left(P_1,\ldots, P_{k+1};\R^n\right)$. Then $f$ determines an element of $L_{\vec{1}}\Link\left(\vec{P};N\right)$. The assumption that there is a path between the elements $t_{\vec{1}}e$ and $t_{\vec{1}}f$ is analogous to (a relative version of) almost triviality of a link map. This is admittedly a little opaque, and to better see how it is related to the classical notion, first consider the following definition.

\begin{defin}\label{D:relativealmosttrivial}
Suppose $e$ and $f$ are given as above. For $S\subset \underline{k+1}$ write $\vec{P}_S=\coprod_{i\notin S}P_i$, and consider the $(k+1)$-cube $S\mapsto\Link\left(\vec{P}_S;N\right)$. We say that $e$ and $f$ are \emph{relatively almost trivial} if the elements of $\holim_{S\neq\emptyset}\Link\left(\vec{P}_S;N\right)$ determined by $e$ and $f$ are homotopic.
\end{defin}

For example, given two $3$-component link maps $e,f:P_1\coprod P_2\coprod P_3\to N^n$, they are relatively almost trivial if the restrictions of $e$ and $f$ to $P_1\coprod P_2$, $P_1\coprod P_3$, and $P_2\coprod P_3$ are homotopic, and that the restriction of these homotopies to $P_1$, $P_2$, and $P_3$ are homotopic where it makes sense to compare them. If two link maps $e$ and $f$ are relatively almost trivial, then the homotopies above determine a path between $t_{\vec{1}}f$ and $t_{\vec{1}}e$. In fact, a condition weaker than \refD{relativealmosttrivial} will suffice to give such a path. Looking back at \refD{mappingspacemodel}, in order for $t_{\vec{1}}e$ and $t_{\vec{1}}f$ to be homotopic, this requires that the corresponding elements of $\Map\left(\vec{P}^{\vec{S}},C\left(|\vec{R}|,N\right)\right)$ determined by $e=(e_1,\ldots, e_{k+1})$ and $f=(f_1,\ldots, f_{k+1})$ (which are the products of some of the components of $e$ and $f$) are homotopic, and that these homotopies are homotopic where it makes sense to compare them.  Certainly the induced maps of configuration spaces are homotopic whenever the link maps are, but the converse need not hold.

\begin{prop}\label{P:linklayersection}
Let $t_{\vec{1}}e,t_{\vec{1}}f\in \Map\left(P_1\times\cdots\times P_{k+1},\tfiber\left(R\mapsto C({\underline{k+1}-R},\R^n\right)\right)$, and suppose they are relatively almost trivial. Then $t_{\vec{1}}e$ and $t_{\vec{1}}f$ define elements $s_e, s_f$ of $\Map\left(P_1\times\cdots\times P_{k+1},\tfiber\left(S\mapsto \vee_{\underline{k}-S}S^{n-1}\right)\right)$ such that
\begin{enumerate}
\item The sections $s_e$ and $s_f$ are unique up to homotopy, and
\item We can arrange for $s_e$ to be the constant section whose value is the wedge point for all $p=(p_1,\ldots, p_{k+1})\in P_1\times\cdots\times P_{k+1}$.
\end{enumerate}
\end{prop}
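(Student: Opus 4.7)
The strategy is to transport $t_{\vec{1}}e$ and $t_{\vec{1}}f$ along the chain of natural weak equivalences supplied by \refL{linklayer} and \refL{configtosphere}, and then to identify basepoints.

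First I would make explicit the passage from link maps to sections. The natural map $\Link(\vec{P};\R^n)\to T_{\vec{1}}\Link(\vec{P};\R^n)$ sends $e$ and $f$ to elements whose images in $\holim_{\vec{\delta}<\vec{1}}T_{\vec{\delta}}\Link(\vec{P};\R^n)$ are the points denoted $t_{\vec{1}}e,t_{\vec{1}}f$. The assumed path between $t_{\vec{1}}e$ and $t_{\vec{1}}f$ in this holim, combined with the constant path at $t_{\vec{1}}e$ (which is the basepoint since $e$ is the basepoint of $\Link$), assembles into two points of
\[
L_{\vec{1}}\Link(\vec{P};\R^n)=\hofiber\bigl(T_{\vec{1}}\Link(\vec{P};\R^n)\longrightarrow\holim_{\vec{\delta}<\vec{1}}T_{\vec{\delta}}\Link(\vec{P};\R^n)\bigr),
\]
one corresponding to $e$ (which is the basepoint of $L_{\vec{1}}\Link$) and one corresponding to $f$. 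Applying \refL{linklayer} and then feeding \refL{configtosphere} through $\Map_\ast((P_1\times\cdots\times P_{k+1})_+,-)$ (which preserves weak equivalences of based targets) produces the desired sections $s_e$ and $s_f$ in $\Map_\ast((P_1\times\cdots\times P_{k+1})_+,\tfiber(S\mapsto\vee_{\underline{k}-S}S^{n-1}))$.

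For part (1), uniqueness up to homotopy is automatic: every step in the above zig-zag is a weak equivalence, so the induced bijection on $\pi_0$ pins down the homotopy class of $s_e,s_f$ from the chosen homotopy class of points in $L_{\vec{1}}\Link(\vec{P};\R^n)$. For part (2), the point of $L_{\vec{1}}\Link$ associated to $e$ is the basepoint, so its image under any pointed equivalence is again the basepoint. The basepoint of $\Map_\ast(Y_+,X)$ is the constant map at the basepoint of $X$, and tracing through the proof of \refL{configtosphere}, the basepoint of each fiber $\R^n-\{k-|S|\text{ points}\}\simeq\vee_{\underline{k}-S}S^{n-1}$ is the class of the $(k+1)^{\text{st}}$ point, which corresponds to the wedge point; hence the basepoint of $\tfiber(S\mapsto\vee_{\underline{k}-S}S^{n-1})$ (in the sense of \refD{tfiber}, where all the $\Phi_S$ are the constant map at the basepoint) is exactly the wedge point. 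Therefore $s_e$ can be taken to be the constant section at the wedge point.

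The main obstacle is bookkeeping basepoints through \refL{configtosphere}: one must verify that the identification between the iterated homotopy fiber of the $(k+1)$-cube of configuration spaces and the total homotopy fiber of the $k$-cube of fibers sends the natural basepoint of the former to the constant-at-wedge-point element of the latter, rather than to some less canonical choice coming from the restriction maps between configuration spaces.
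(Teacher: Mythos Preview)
Your proposal is correct and follows essentially the same approach as the paper's own proof: transport along the equivalences of \refL{linklayer} and \refL{configtosphere}, then check that the basepoint determined by $e$ lands on the wedge point. The paper's proof is in fact terser than yours; its only substantive remark is the one you flag as the ``main obstacle,'' namely that the fiberwise equivalence $\R^n-\{e(p_i):i\neq k+1\}\simeq\vee_{\underline{k}}S^{n-1}$ can be chosen so that $e(p_{k+1})$ corresponds to the wedge point, and it declares uniqueness to be straightforward.
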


\begin{proof}
Regard $t_{\vec{1}}e$ as the basepoint of $\Map\left(P_1\times\cdots\times P_{k+1},\tfiber\left(R\mapsto C({\underline{k+1}-R},\R^n\right)\right)$. Thus for each $p=(p_1,\ldots, p_{k+1})\in P_1\times\cdots\times P_{k+1}$ we have a preferred basepoint of $C(\underline{k+1},\R^n)$ given by the configuration $(e(p_1),\ldots, e(p_{k+1}))$. We may arrange for the fiberwise (over $P_1\times\cdots\times P_{k+1}$) equivalence $C(\{k+1\},\R^n-\{e(p_i),i\neq k+1\}\simeq\vee_{\underline{k}}S^{n-1}$ given in \refL{configtosphere} to identify $e(p_{k+1})$ with the wedge point.

To show that $s_f$ is unique up to homotopy is straightforward. 
\end{proof}

That $s_f$ is unique up to homotopy assures us that invariants we extract from $s_f$ really are invariants of $f$ itself.


\section{Acknowledgments}\label{S:Thanks}

The author would like to thank Greg Arone, Vladimir Chernov, Michael Ching, Tom Goodwillie, Mike Hopkins, Brenda Johnson, and Ismar Voli\'{c} for helpful conversations. He would also like to thank Harvard University and Wellesley College for their hospitality.


\def\cprime{$'$} \def\cprime{$'$}
\providecommand{\bysame}{\leavevmode\hbox to3em{\hrulefill}\thinspace}
\providecommand{\MR}{\relax\ifhmode\unskip\space\fi MR }
\providecommand{\MRhref}[2]{%
  \href{http://www.ams.org/mathscinet-getitem?mr=#1}{#2}
}
\providecommand{\href}[2]{#2}

%

\end{document}